\newcommand{\interior}[1]{\ensuremath{\mathrm{Int}}(#1)}
\def \gr {\operatorname{Graph}}
\def \cl {\operatorname{cl}}
\def \cB{\mathcal{B}}
\def \cM{\mathcal{M}}
\DeclareMathOperator{\dpr}{dp-rk}
\newcommand{\acl}{\ensuremath{\mathrm{acl}}}
\newcommand{\tto}{\rightrightarrows}
\def \cl {\mathrm{cl}}
\newtheorem{Th}{Theorem}[section]
\newtheorem{Cor}[Th]{Corollary}
\newtheorem{Prop}[Th]{Proposition}
\newtheorem{Lem}[Th]{Lemma}
\newtheorem*{Lem*}{Lemma}
\title{tame topology over dp-minimal structures}
\author{Pierre Simon}
\address{Univ Lyon, Universit\'e Claude Bernard Lyon 1, CNRS UMR 5208, Institut Camille Jordan, 43 blvd. du 11 novembre 1918, F-69622 Villeurbanne cedex, France}
\email{simon@math.univ-lyon1.fr}
\thanks{The first author was partially supported by ValCoMo (ANR-13-BS01-0006).}
\author{Erik Walsberg}
\address{Institute of Mathematics\\
The Hebrew University of Jerusalem\\
Givat Ram\\
Jerusalem, 91904\\
Israel}
\email{erikw@math.ucla.edu} 
\thanks{The second author was supported by the European Research Council under the European Union's Seventh Framework Programme (FP7/2007-2013) / ERC Grant agreement no.\ 291111/ MODAG}
\begin{document}
\begin{abstract}
In this paper we develop tame topology over dp-minimal structures equipped with definable uniformities satisfying certain assumptions.
Our assumptions are enough to ensure that definable sets are tame: there is a good notion of dimension on definable sets, definable functions are almost everywhere continuous, and definable sets are finite unions of graphs of definable continuous ``multi-valued functions''. 
 This generalizes known statements about weakly o-minimal, C-minimal and P-minimal theories.
\end{abstract}
\maketitle
This paper is a contribution to the study of generalizations and variations of o-minimality.
O-minimality is a model-theoretic notion of \emph{tame geometry}. 
Over an o-minimal structure definable functions are piecewise continuous and there is a well-behaved notion of dimension for definable sets. 
Conditions similar to o-minimality have been investigated, such as weak o-minimality and C-minimality, which imply analogous---though weaker---tameness properties. 
More recently, it was observed in the ordered case that a purely combinatorial condition, dp-minimality, is enough to imply such properties.
The theory of dp-minimal ordered structures can be seen as a generalization of the theory of weakly o-minimal structures, see \cite{Goodrick} and \cite{Simon:dp-min}.
The present paper continues this line of work as our results hold over dp-minimal expansions of divisible ordered abelian groups.

We use a framework which includes both dp-minimal expansions of divisible ordered abelian groups and dp-minimal expansions of valued fields.
We work with a dp-minimal structure $M$ equipped with a definable uniform structure.
We assume that $M$ does not have any isolated points and that every infinite definable subset of M has nonempty interior.
It follows from work of Simon~\cite{Simon:dp-min} that these assumptions hold for a dp-minimal expansion of a divisible ordered abelian group.
It follows from the work of Johnson~\cite{John:dpmin} that our assumptions hold for a non strongly minimal dp-minimal expansions of fields, in particular for a dp-minimal expansion of a valued field.
Our main results are:

\begin{enumerate}
\item Naive topological dimension, acl-dimension and dp-rank all agree on definable sets and are definable in families.

\item A definable function is continuous outside of a set of smaller dimension.

\item Definable sets are finite unions of graphs of continuous definable correspondences $U\rightrightarrows M^l$, $U\subseteq M^k$ an open set.

\item The dimension of the frontier of a definable set is strictly less then the dimension of the set.
\end{enumerate}
A correspondence is a continuous ``multi-valued function", this is made precise below.
The third bullet is as close as we can get to cell decomposition. 
Note that we do not say anything about definable open sets.
Cubides-Kovacsics, Darni\`ere and Leenknegt~\cite{cdl:pmin} recently showed that (2)-(4) above hold for P-minimal expansions of fields.
Dolich, Goodrick and Lippel~\cite{dgl:dpmin} showed that P-minimal structures are dp-minimal so our work yields another proof of (2)-(4) for P-minimal structures.
It follows from Proposition~\ref{frontier} below that (4) above holds for expansions of ordered groups with weakly o-minimal theory, this appears to be novel.
Eleftheriou, Hasson and Keren have recently shown \cite[Lemma 4.20]{ehk} that (4) holds for non-valuational weakly o-minimal expansions of ordered groups.
Proposition~\ref{frontier} generalizes this as non-valuational weakly o-minimal expansions of ordered groups have weakly o-minimal theory by \cite[Theorem 6.7]{MMS}.

\smallskip
We would like to thank the referee for many helpful comments.

\section{Conventions and Assumptions}
Throughout $T$ is a complete NIP theory in a multi-sorted language $L$ with a distinguished home sort and $\cM$ is an $|L^+|$-saturated model of $T$ with home sort $M$.
Throughout ``definable'' without modification means ``$\cM$-definable, possibly with parameters''.
A definable set $A$ has \textbf{dp-rank} greater than $n$ if for $0 \leq i \leq n$ there are formulas $\phi_i(x, y)$ and infinite sets $B_i \subseteq M$ such that for any $(b_0,\ldots,b_n) \in B_0 \times \ldots \times B_n$ there is an $a \in A$ such that:
$$[ \mathcal M \models \phi_i(a, y) ] \longleftrightarrow [y = b_i] \quad \text{for all } 0 \leq i \leq n, y \in B_i.$$
The theory $T$ is \textbf{dp-minimal} with respect to the home sort if $M$ has dp-rank one.
We assume throughout that $\cM$ is dp-minimal.
See Chapter 4 of~\cite{Simon:book} for more about dp-ranks.

We assume that $M$ is equipped with a definable uniform structure.
We first recall the classical notion of a uniform structure on the set $M$.
We let $\Delta \subseteq M^2$ be the set of $(x,y)$ such that $x = y$.
Given $U,V \subseteq M^2$ we declare:
$$U \circ V := \{ (x,z) \in M^2 : (\exists y \in M) (x,y) \in U, (y,z) \in V\}.$$
A \textbf{basis} for a uniform structure on $M$ is a collection $\mathcal B$ of subsets of $M^2$ satisfying the following:
\begin{enumerate}
\item the intersection of the elements of $\mathcal B$ is equal to $\Delta$;
\item if $U \in \mathcal B$ and $(x,y) \in U$ then $(y,x) \in U$;
\item for all $U,V \in \mathcal B$ there is a $W\in \mathcal B$ such that $W\subseteq U\cap V$;
\item for all $U\in \mathcal B$ there is a $V\in \mathcal B$ such that $V\circ V \subseteq U$.
\end{enumerate}
The \textbf{uniform structure} on $M$ generated by $\mathcal B$ is:
 $$\tilde{\mathcal B} := \{U \subseteq M^2 : 
(\exists V\in \mathcal B)\,V\subseteq U\}.$$
 Elements of $\tilde{\mathcal B}$ are called \textbf{entourages} and elements of $\mathcal B$ are called \textbf{basic entourages}.
Given $U \in \mathcal B$ and $x \in M$ we declare 
$$U[x] := \{y : (x,y)\in U\}.$$
We say that $U[x]$ is a \textbf{ball} with center $x$.
We put a topology on $M$ by declaring that a subset $A\subseteq M$ is open if for every $x\in A$ there is a $U\in  \mathcal B$ such that $U[x] \subseteq A$.
Assumption (1) above ensures that this topology is Hausdorff.
The collection $\{ U[x] : U \in \mathcal B\}$ forms a neighborhood basis at $x$ for each $x \in M$.
Abusing terminology, we say that $\mathcal B$ is a \textbf{definable uniform structure} if there is a formula $\varphi(x,y,\bar{z})$ such that
$$ \mathcal  B = \{ \varphi(M^2,\bar{c})\;| \; \bar{c} \in D\}$$
for some definable set $D$.
We assume throughout that $M$ is equipped with a definable uniform structure $\mathcal B$.
On each $M^k$, we put the product uniform structure, generated by $\{U_1\times \cdots \times U_k : U_i \in \cB\}$ or equivalently (because of axiom (1)), by $\{U^k : U\in \cB\}$.
Given $x = (x_1,\ldots,x_k) \in M^k$ and $U \in \cB$ we declare:
$$U[x] := \{ (y_1,\ldots,y_k) : (\forall i) (x_i,y_i) \in U \} \subseteq M^k.$$
We give the main examples of definable uniform structures.
\begin{enumerate}
\item Suppose that $\Gamma$ is an $\mathcal M$-definable ordered abelian group and $d$ is a definable $\Gamma$-valued metric on $M$.
We than take $\mathcal B$ to be the collection of sets of the form 
$$\{ (x,y) \in M^2 : d(x,y) < t \} \quad \text{for } t \in \Gamma.$$
The typical case is when $\Gamma = M$ and $d(x,y) = | x - y|$.
\item Suppose that $\Gamma$ is a definable linear order with minimal element and that $d$ is a definable $\Gamma$-valued ultrametric on $M$. Then we can put a definable uniform structure on $M$ in the same way as above.
The usual case is when $M$ is a valued field.
\item Suppose that $M$ expands a group. Let $D$ be a definable set and suppose that $\{ U_{\bar{z}}: \bar{z} \in D\}$ is a definable family of subsets of $M$ which forms a neighborhood basis at the identity for the topology on $M$ under which $M$ is a topological group. Then the sets 
$$\{ (x,y) \in M^2 : x^{-1}y \in U_{\bar{z}}\} \quad  \text{for } \bar{z} \in D$$ 
form a definable uniform structure on $M$.
\end{enumerate}
We assume that $M$ satisfies two topological conditions:
\begin{enumerate}
\item $M$ does not have any isolated points.
\item (\textbf{Inf}): every infinite definable subset of $M$ has nonempty interior.
\end{enumerate}
The first assumption rules out the trivial discrete uniformity.
The second is known for certain dp-minimal structures.
In \cite{Simon:dp-min} (\textbf{Inf}) was proven for dp-minimal expansions of divisible ordered abelian groups.
This was generalized in Proposition 3.6 of \cite{dpval} where (\textbf{Inf}) was proven under the assumption that $M$ admits a definable group structure under which $M$ is a topological group and such that for every entourage $U$ and integer $n$ there is an entourage $V$ such that $(\forall y\in V[0])(\exists x\in U[0])(n\cdot y=x)$.
It follows directly from the work of Johnson that our assumptions hold for any dp-minimal expansion of a field which is not strongly minimal:
\begin{Prop}
Let $F$ be a dp-minimal expansion of a field which is not strongly minimal.
Then $F$ admits a definable uniform structure without isolated points and every infinite definable subset of $F$ has nonempty interior with respect to this uniform structure.
\end{Prop}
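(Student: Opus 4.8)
This proposition is essentially a dictionary entry: it translates Johnson's structure theory for dp-minimal fields into the framework of this paper, so the plan is to assemble the relevant results from \cite{John:dpmin} and to check that their hypotheses and conclusions match what is needed here. The starting point is Johnson's \emph{canonical topology} $\tau$ on a dp-minimal field $F$: it is a definable ring topology, it is non-discrete and Hausdorff exactly when $F$ is not strongly minimal, and in that case it is a $V$-topology, induced by a definable order or a definable valuation. (When $F$ is strongly minimal every infinite definable subset of $F$ is cofinite and there is no non-trivial topology to be had; when it is not, an infinite co-infinite definable set provides the data from which the topology is built.) So under our hypothesis $(F,+)$ is a Hausdorff topological group for $\tau$, and by definability of $\tau$ there is a definable family $\{U_{\bar z} : \bar z \in D\}$ of $\tau$-neighborhoods of $0$ which is a neighborhood basis at $0$; after replacing each $U_{\bar z}$ by $U_{\bar z}\cap(-U_{\bar z})$ we may assume every member is symmetric.

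Next I would apply the third construction of a definable uniform structure listed above, taking the group to be $(F,+)$: the sets
$$\{(x,y)\in F^2 : x-y\in U_{\bar z}\}\qquad(\bar z\in D)$$
form a definable family, and axioms (1)--(4) in the definition of a basis for a uniform structure follow routinely from $\tau$ being a Hausdorff ring topology --- (1) from Hausdorffness, (2) from the symmetrization above, (3) from $\{U_{\bar z}\}$ being a filter basis, (4) from continuity of addition at $0$. Call the resulting definable uniform structure $\mathcal B$; by construction it induces the topology $\tau$ on $F$.

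Finally I would verify the two topological conditions. Non-discreteness of $\tau$ (a consequence of non-strong-minimality) together with homogeneity of the topological group $(F,+,\tau)$ shows that no point is isolated. For (\textbf{Inf}) I would invoke the property that makes Johnson's topology ``canonical'': a definable subset of $F$ has empty $\tau$-interior if and only if it is finite, equivalently every infinite definable subset has nonempty $\tau$-interior; since $\mathcal B$ induces $\tau$ this is exactly the statement we want.

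The only real content is the input from \cite{John:dpmin}, and the main obstacle will be matching the precise form in which Johnson records the canonical topology (a definable neighborhood system, or a definable valuation, or a definable order, according to a case division) with both the notion of definable uniform structure used here and the statement that infinite definable sets have nonempty interior. If that last statement is not available off the shelf in exactly this form, it has to be extracted from dp-minimality: an infinite definable $X\subseteq F$ with empty interior would, via a suitable affine copy, yield two ``topologically independent'' partitions of a neighborhood of $0$ and hence a dp-rank $2$ pattern, contradicting dp-minimality. I expect this reconciliation --- rather than any new idea --- to be where the work lies, though it is already the point of Johnson's construction, which is designed precisely so that the infinite definable subsets are the topologically large ones.
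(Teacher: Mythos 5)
Your proposal is correct and follows essentially the same route as the paper: both rely on Johnson's construction of a definable non-discrete field topology on a dp-minimal, non-strongly-minimal field (Section 4 of \cite{John:dpmin}), derive the definable uniform structure from the additive group as in the paper's third example, and cite Johnson for the fact that infinite definable sets have nonempty interior (the paper points to Lemma 5.2 of \cite{John:dpmin}, so the extraction you worry about at the end is indeed available off the shelf). You simply spell out in more detail the translation step that the paper compresses into ``it follows that $F$ admits a definable uniform structure without isolated points.''
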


\begin{proof}
It is proven in Section 4 of~\cite{John:dpmin} that $F$ admits a definable topology under which $F$ is a non-discrete topological field.
It follows that $F$ admits a definable uniform structure without isolated points.
Lemma 5.2 of~\cite{John:dpmin} shows that any infinite definable subset of $F$ has nonempty interior with respect to this topology.
\end{proof}
We finally recall some general notions.
Given sets $A,B$ and $C \subseteq A \times B$ we let
$$ C_b = \{ a \in A : (a,b) \in C\} \quad \text{for any } b \in B. $$ 
We say that family of sets $\{ A_i : i \in I \}$ is \textbf{directed} if for every $i,j \in I$ there is a $k \in I$ such that $A_i \cup A_j \subseteq A_k$.
Given a subset $A$ of a topological space we let $\cl(A)$ be the closure of $A$ and $\interior{A}$ be the interior of $A$.
The \textbf{frontier} of $A$ is $\partial(A) = \cl(A) \setminus A$.
An accumulation point of $A$ is point $p$ such that every neighborhood of $p$ contains a point in $A$ other then $p$.
The set $A$ is discrete if it has no accumulation points.
The set $A$ is \textbf{locally closed} if every $p \in A$ has a neighborhood $U$ such that $U \cap A$ is closed in $U$.
A subset of a topological space is locally closed if and only if it is the intersection of a closed set and an open set.
\begin{Lem}\label{lemma:locallyclosed}
A definable locally closed set is the intersection of a definable closed set and a definable open set.
\end{Lem}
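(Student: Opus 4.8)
The plan is to write $A$ in the form $\cl(A)\cap\bigl(M^k\setminus\partial(A)\bigr)$, where $A\subseteq M^k$, and to check that both factors are definable and of the correct type. That this intersection really equals $A$ is automatic: since $A\subseteq\cl(A)$ we have $\cl(A)\setminus\bigl(\cl(A)\setminus A\bigr)=A$, i.e.\ $\cl(A)\cap\bigl(M^k\setminus\partial(A)\bigr)=A$. So there are genuinely two things to verify: that $\cl(A)$ is definable (whence $\partial(A)=\cl(A)\setminus A$ and its complement $M^k\setminus\partial(A)$ are definable), and that $\partial(A)$ is closed (whence $M^k\setminus\partial(A)$ is open).

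For definability of the closure I would use that $\mathcal B$ is a definable uniform structure: fix a formula $\varphi(x,y,\bar z)$ and a definable set $D$ with $\mathcal B=\{\varphi(M^2,\bar c):\bar c\in D\}$. In the product uniformity on $M^k$ the balls $U^k[x]$ with $U\in\mathcal B$ form a neighborhood basis at $x$, so $x\in\cl(A)$ iff every such ball meets $A$, which is the first-order condition $\forall\bar c\in D\;\exists y\in M^k\,\bigl(\bigwedge_{i}\varphi(x_i,y_i,\bar c)\wedge y\in A\bigr)$. Hence $\cl(A)$ is definable over the parameters defining $A$ and $\mathcal B$, and therefore so is $\partial(A)=\cl(A)\setminus A$ and its complement.

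For closedness of the frontier I would invoke the characterization recalled just above the statement: since $A$ is locally closed, $A=C\cap O$ for some closed $C$ and open $O$ (not necessarily definable). Then $\cl(A)\subseteq\cl(C)=C$, so $\cl(A)\cap O\subseteq C\cap O=A\subseteq\cl(A)\cap O$, giving $\cl(A)\cap O=A$; thus $A$ is open in the subspace $\cl(A)$, so $\partial(A)=\cl(A)\setminus A$ is closed in $\cl(A)$, hence closed in $M^k$ since $\cl(A)$ is closed in $M^k$. Putting the pieces together, $C':=\cl(A)$ is a definable closed set, $O':=M^k\setminus\partial(A)$ is a definable open set, and $A=C'\cap O'$.

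I do not anticipate a real obstacle here: the only point requiring a little care is writing down the defining formula for $\cl(A)$ in the product uniformity correctly, and that is routine once one observes that the ``cube'' entourages $U^k$ are cofinal among the product entourages, so that the $U^k[x]$ indeed form a neighborhood basis at $x$.
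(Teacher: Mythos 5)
Your proof is correct, but it takes a different route from the paper's. The paper does not pass through the frontier at all: it uses the local data directly, taking $V$ to be the union of all balls $U[p]$ with $p \in A$, $U \in \cB$, and $U[p] \cap A = U[p] \cap \cl(A)$, and then checks $V \cap \cl(A) = A$; the definability of $V$ rests on the same observation you make explicitly, namely that $\cl(A)$ is definable because $\cB$ is a definable family and the balls $U[x]$ form a neighborhood basis. You instead use the canonical decomposition $A = \cl(A) \cap \bigl(M^k \setminus \partial(A)\bigr)$, which requires the extra (easy, purely topological) step that the frontier of a locally closed set is closed, but in exchange produces the largest possible open factor and makes the definability of the closure explicit rather than implicit. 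Both arguments are short and correct; yours is somewhat more self-contained in spelling out the first-order formula for $\cl(A)$, while the paper's avoids any discussion of $\partial(A)$ by building the open set directly from the balls witnessing local closedness.
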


\begin{proof}
Suppose that $A$ is locally closed.
For every $p \in A$ there is a $U \in \mathcal B$ such that $U[p] \cap A$ is closed in $U[p]$. Note that this is equivalent to $U[p]\cap A=U[p]\cap cl(A)$.
Let $V$ be the union of all $U[p]$ such that $p \in A$ and $U[p] \cap A=U[p] \cap cl(A)$.
Then $V$ is open and definable and one easily checks that $V \cap \cl(A) = A$.
\end{proof}
\smallskip
\noindent \textit{Throughout this paper $C$ is a small set of parameters and $A$ is a $C$-definable subset of $M^k$.}

\section{Dimension}
In this section we develop a theory of dimension for definable subsets of $M^k$.
We begin by noting that (\textbf{Inf}) implies that $M$ eliminates $\exists^\infty$:

\begin{Lem}\label{lemma:eliminateinfty}
If $D$ is definable and $\{ A_x : x \in D\}$ is a definable family of subsets of $M$ then there is an $n$ such that if $|A_x| > n$ then $A_x$ is infinite for all $x \in D$.
\end{Lem}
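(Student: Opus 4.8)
The plan is to deduce elimination of $\exists^\infty$ from (\textbf{Inf}) by a compactness argument, using the fact that in a sufficiently saturated model an infinite definable set must have nonempty interior and hence contains a ball. Suppose toward a contradiction that no such $n$ exists. Then for every $n \in \N$ there is a parameter $x_n \in D$ such that $A_{x_n}$ is finite with $|A_{x_n}| > n$. By compactness (working in a suitable elementary extension, or using saturation of $\cM$ if the family is over a small parameter set, which we may arrange by naming parameters) there is a single $x \in D$ in some elementary extension such that $A_x$ is infinite; the point is that the property ``$A_x$ has at least $m$ elements'' is first-order and holds for arbitrarily large $m$, so it is finitely satisfiable together with $x \in D$, hence realized. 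But (\textbf{Inf}) is a property of the complete theory $T$ — every infinite definable subset of the home sort has nonempty interior — so in this extension $A_x$ has nonempty interior and therefore contains a ball $U[a]$ for some $a$ and some basic entourage $U$. Since $M$ has no isolated points, every ball is infinite, and moreover ``$A_x \supseteq U[a]$ for some $a$ and some $U$ in the definable family'' is itself expressible by a first-order formula in the parameters $x$ (and the parameter defining the uniformity family). Hence this formula, which is inconsistent with ``$A_x$ is finite'' in $\cM$ already (since a ball is infinite), nonetheless must be satisfiable there by some parameter coming from the sequence $x_n$ once $n$ is large enough — here one needs that the formula ``$|A_x| \le N$ for some fixed $N$'' fails for large $n$ while ``$A_x$ contains a ball'' would force infiniteness.

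To make the last step precise and avoid circularity, I would instead argue directly inside $\cM$ as follows. For each $n$, let $\psi_n(x)$ be the formula asserting $x \in D$ and $n < |A_x| < \aleph_0$, i.e., $|A_x| > n$ together with $|A_x| \le N$ for... — this cannot be written for a single $N$, which is exactly the obstruction, so the clean route is the compactness/saturation one: pass to the partial type $p(x)$ generated by $\{x \in D\} \cup \{\,|A_x| \ge m : m \in \N\,\}$, which is finitely satisfiable in $\cM$ by the $x_n$, hence realized in $\cM$ itself by $|L^+|$-saturation (the type is over the small parameter set $C$). Let $b$ realize $p$; then $A_b$ is an infinite $C$-definable subset of $M$, so by (\textbf{Inf}) it has nonempty interior, hence contains a ball, which contradicts nothing yet — so this alone does not give a contradiction. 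The actual contradiction must instead come from showing the negation of the lemma is inconsistent with (\textbf{Inf}) \emph{in the theory}, which is what the authors presumably do.

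The main obstacle, then, is precisely this: ``$A_x$ is finite'' is not first-order, so a naive compactness argument does not immediately yield a contradiction from a sequence of finite-but-large fibers. The correct approach is: assume the conclusion fails, find (by saturation, as above) a single $b \in D$ with $A_b$ infinite; by (\textbf{Inf}) $A_b$ contains some ball $U[a]$. Now the set $D' = \{x \in D : A_x \text{ contains a ball } U[a] \text{ with } a \in M,\ U \in \cB\}$ — or more carefully $D' = \{x \in D : (\exists a)(\exists \bar c \in D_{\cB})\ \varphi(M^2,\bar c)[a] \subseteq A_x\}$ — is definable, and for $x \in D'$ the set $A_x$ is infinite (a ball is infinite since there are no isolated points). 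So $D \setminus D'$ consists exactly of those $x$ with $A_x$ having empty interior; by (\textbf{Inf}), for $x \in D \setminus D'$ the set $A_x$ is finite. It remains to bound $|A_x|$ uniformly for $x \in D \setminus D'$: if this failed there would be, by the saturation argument applied to the family $\{A_x : x \in D \setminus D'\}$, some $b \in D \setminus D'$ with $A_b$ infinite, contradicting that $b \notin D'$. Hence $|A_x| \le n$ for all $x \in D \setminus D'$ for some fixed $n$, and this $n$ witnesses the lemma: whenever $|A_x| > n$ we have $x \in D'$, so $A_x$ contains a ball and is infinite.
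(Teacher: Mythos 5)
Your final paragraph is a correct proof and follows essentially the paper's own route: use (\textbf{Inf}) together with the absence of isolated points to make the condition ``$A_x$ is finite'' definable in $x$ (the paper phrases this as ``a definable subset of $M$ is discrete if and only if it is finite'', you phrase it as ``$A_x$ contains a ball if and only if $A_x$ is infinite''), and then apply saturation to the definable family of finite fibers to extract the uniform bound $n$. The exploratory first two paragraphs can simply be deleted; everything needed is in the last one.
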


\begin{proof}
A definable subset of $M$ is discrete if and only if it is finite.
Therefore the set of $x \in D$ such that $A_x$ is finite is definable.
The lemma follows by saturation.
\end{proof}
There are several natural notions of dimension on definable subsets of $M^k$.
The \textbf{naive topological dimension} of a definable set $A$ is the maximal $l$ for which there is a coordinate projection $\pi: M^k \rightarrow M^l$ such that $\pi(A)$ has non-empty interior.
The \textbf{acl-dimension}, $\dim(\bar a/C)$, of a tuple $\bar a \in M^k$ over the base $C$ is the minimal $l$ such that there is a subtuple $\bar a'\subseteq \bar a$ of length $l$ such that $\bar a\in \acl(C\bar a')$. 
The $\acl$-dimension of $A$ is defined to be
$$\dim(A) := \max \{ \dim(\bar{a}/C) : \bar a \in A \}.$$
We can replace $C$ with any base that defines $A$, so this notion of dimension does not depend on $C$ (if say $\bar a\notin \acl(C\bar a')$ for $\bar a'$ a subtuple of $\bar a$ and $C\subseteq C_1$, then we can find $\bar a_1 \equiv_{Ca'} \bar a$ such that $\bar a_1 \notin \acl(C_1\bar a')$).
It is easy to see that $\acl$-dimension is subadditive.
If $\acl$ satisfies exchange, then by \cite[Proposition 3.2]{invdp}, $\acl$-dimension coincides with dp-rank.
In this section we prove Proposition~\ref{Prop:coin} which states that naive topological dimension, $\acl$-dimension and dp-rank coincide on definable sets.

\begin{Lem}\label{lem:fulldim}
If the naive dimension, $\acl$-dimension or dp-rank of $A$ is equal to $k$, then $A$ has non-empty interior.
\end{Lem}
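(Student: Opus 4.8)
The plan is to reduce all three hypotheses to a single statement about $\acl$-dimension and then prove it by induction on $k$. The naive case is immediate: the only coordinate projection $M^k\to M^k$ is the identity, so ``naive dimension $k$'' literally says $A$ has non-empty interior. For the dp-rank case, recall that $A\subseteq M^k$ forces $\dpr(A)\le k$ (dp-minimality and sub-additivity of dp-rank), and that for any tuple $\bar a$ sub-additivity together with the vanishing of dp-rank on algebraic types gives $\dpr(\bar a/C)\le\dim(\bar a/C)$; hence if $\dpr(A)=k$ then, taking by saturation some $\bar a\in A$ with $\dpr(\bar a/C)\ge k$, we get $\dim(\bar a/C)\ge k$, so $\dim(A)=k$. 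Thus it suffices to show that $\dim(A)=k$ implies $\interior{A}\neq\emptyset$.

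We argue by induction on $k$. For $k=1$, $\dim(A)=1$ means some point of $A$ lies outside $\acl(C)$, so $A$ is infinite (a finite $C$-definable subset of $M$ is contained in $\acl(C)$), and $\interior{A}\neq\emptyset$ by (\textbf{Inf}). A convenient observation, used repeatedly below, is that if $D\subseteq M$ is $C$-definable and $a\in D\setminus\acl(C)$ then $a\in\cl(\interior{D})$, since $D\setminus\cl(\interior{D})$ is $C$-definable with empty interior, hence finite by (\textbf{Inf}), hence contained in $\acl(C)$.

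For the inductive step, assume the statement in dimension $k-1$, fix $\bar a=(a_1,\dots,a_k)\in A$ with $\dim(\bar a/C)=k$, and write $\bar a'=(a_1,\dots,a_{k-1})$. Then $a_k\notin\acl(C\bar a')$, and $\dim(\bar a'/C)=k-1$ (otherwise $\bar a\in\acl(C\bar b\,a_k)$ for a proper subtuple $\bar b$ of $\bar a'$, contradicting $\dim(\bar a/C)=k$). The fibre $A_{\bar a'}=\{t:(\bar a',t)\in A\}$ is $C\bar a'$-definable and contains $a_k\notin\acl(C\bar a')$, so it is infinite; by Lemma~\ref{lemma:eliminateinfty} the set $B=\{\bar y\in M^{k-1}:A_{\bar y}\text{ is infinite}\}$ is therefore $C$-definable, contains $\bar a'$, and so has $\dim(B)=k-1$. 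By the inductive hypothesis $\interior{B}\neq\emptyset$, so fix a non-empty open $U\subseteq B$.

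It remains to deduce $\interior{A}\neq\emptyset$ from the fact that $A_{\bar y}$ has non-empty interior for every $\bar y\in U$, and this is where I expect the real work to be: the implication fails for arbitrary sets, so definability must be exploited. The tool is a directedness argument. For an entourage $W$ put $B_W=\{\bar y\in U:W[t]\subseteq A_{\bar y}\text{ for some }t\}$; then $\{B_W\}_{W\in\cB}$ is directed and uniformly definable with $\bigcup_W B_W=U$. If every $B_W$ were finite, Lemma~\ref{lemma:eliminateinfty} would bound their sizes uniformly, and a directed union of finite sets of uniformly bounded size is finite, contradicting that $U$ is infinite; so some $B_{W_1}$ has non-empty interior. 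We may then replace $A$ by the definable subset $A'=\{(\bar y,s):\bar y\in B_{W_1},\ s\in W_1[t]\subseteq A_{\bar y}\text{ for some }t\}\subseteq A$, all of whose last-coordinate fibres are open and which still projects onto a set with non-empty interior. Iterating this normalisation over the remaining coordinates --- each time extracting a uniform entourage by the same directedness trick and re-applying the inductive hypothesis to the relevant $(k-1)$-dimensional projection --- finally produces an explicit product of balls inside $A$. Everything apart from this last assembly of a box from fibrewise data is routine bookkeeping with (\textbf{Inf}), the elimination of $\exists^{\infty}$, and the inductive hypothesis.
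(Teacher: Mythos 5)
Your reductions are fine as far as they go: the naive case is trivial, the dp-rank case does follow from the acl-dimension case (granting the standard facts that $\dpr(A)$ is realized by some $\bar a\in A$ and that $\dpr(\bar a/C)\leq\dim(\bar a/C)$ --- facts the paper does not prove, since it instead runs one induction covering both ranks using only subadditivity, unions, and finiteness of rank-$0$ sets), and your setup legitimately produces an open $U\subseteq M^{k-1}$ all of whose fibres $A_{\bar y}\subseteq M$ have non-empty interior. But the proof breaks exactly where you park ``the real work''. First, from ``not every $B_W$ is finite'' you conclude that some $B_{W_1}$ has non-empty interior; yet $B_{W_1}\subseteq M^{k-1}$, and for $k\geq 3$ an infinite definable subset of $M^{k-1}$ need not have interior --- (\textbf{Inf}) is an assumption about subsets of $M$ only. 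To repair this you would need a member of the directed family of full dimension $k-1$ (and then the inductive hypothesis); but the dimension of a member of a directed definable family can a priori be smaller than that of its union, and the statement you would be invoking is essentially Lemma~\ref{lem:poset}, which the paper proves later, on top of Lemma~\ref{lem:dprank}, which itself rests on the very lemma you are proving. (It can be made honest by a saturation argument placing a countable box inside a single $B_{W_1}$ and showing such a set has acl-dimension $k-1$, but that is not in your text; likewise your uniform bound on the $B_W$ uses elimination of $\exists^\infty$ for subsets of $M^{k-1}$, while Lemma~\ref{lemma:eliminateinfty} is stated for subsets of $M$.)

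Second, and more seriously, the concluding ``iteration'' is not an argument, and the natural way to continue fails: after fixing $W_1$, the ball $W_1[t]\subseteq A_{\bar y}$ has a center $t=t(\bar y)$ that still varies with $\bar y$, and the family $\{\{\bar y: W_1[t]\subseteq A_{\bar y}\}: t\in M\}$ is \emph{not} directed, so ``the same directedness trick'' cannot be used to uniformize the centers. That assembly of a box from fibrewise data is precisely the content of the paper's proof, which sidesteps the obstruction by fibering the other way: it first removes a set of dimension $\leq k-1$ so that every fibre $A_b\subseteq M^{k-1}$ over the \emph{first} coordinate is open, then projects onto the last $k-1$ coordinates, picks a single $\bar c\in M^{k-1}$ whose fibre is infinite, and uniformizes over that fixed center: the sets $P_U=\{b\in M: \{b\}\times U[\bar c]\subseteq A\}$, $U\in\cB$, form a directed definable family of subsets of $M$, where infinite really does imply non-empty interior by (\textbf{Inf}) together with elimination of $\exists^\infty$, and a box $V\times U[\bar c]\subseteq A$ appears at once. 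Without either this maneuver or an independently proved analogue of Lemma~\ref{lem:poset}, your inductive step does not go through.
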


\begin{proof}
It is clear from the definition of naive dimension that if the naive dimension of $A$ equals $k$ then $A$ has non-empty interior.
We show that if $\dim(A) = k$ then $A$ has non-empty interior.
Our proof also shows that if $\dpr(A) = k$ then $A$ has non-empty interior.
We only use four properties of $\acl$-dimension which hold as well for dp-rank.
We first collect these properties.
Let $D,E \subseteq M^{l + k}$ be definable and let $\pi: M^{ l +k} \rightarrow M^l$ be the projection onto the first $l$ coordinates.
Then:
\begin{enumerate}
\item $\dim(D) = 0$ if and only if $D$ is finite, and $\dim(M) = 1$;
\item $\dim(D \cup E) = \max\{ \dim(D) , \dim(E) \}$;
\item $\dim$ is subadditive:
$$ \dim(D) \leq \dim[\pi(D)] + \max\{ \dim(D_b) : b \in M^k \}. $$
\end{enumerate}
See e.g. \cite[Chapter 4]{Simon:book} for proofs that these properties hold for dp-rank.
We prove the proposition by applying induction to $k$.
If $k = 1$ then (1) and (2) above imply that $\dim(A) = 1$ if and only if $A$ is infinite, and \textbf{(Inf)} implies that $A$ is infinite if and only if $A$ has non-empty interior.
This establishes the base case.

Suppose that $k \geq 2$ and that $\dim(A) = k$.
The inductive hypothesis implies, for all $b \in M$, that $\dim(A_b) = k - 1$ if and only if $A_b$ has non-empty interior in $M^{k - 1}$.
Let $B \subseteq M\times M^{k-1}$ be the set of $(b, \bar a) \in A$ such that $\bar a \notin \interior{A_b}$.
Then $\dim(B_b) \leq k - 2$ for every $b \in M$.
Subadditivity shows that $\dim(B) \leq k -1$, so by (2) we have $\dim(A \setminus B) = k$.
It suffices to show that $A \setminus B$ has interior in $M^k$.
After replacing $A$ with $A \setminus B$ we suppose that $A_b$ is an open subset of $M^{k - 1}$ for all $b \in M$.
Let $\pi: A \rightarrow M^{k - 1}$ be the projection onto the last $(k-1)$-coordinates.
If $\pi^{-1}(\bar c)$ is finite for all $\bar c \in M^{k - 1}$ then subadditivity would imply $\dim(A) \leq k - 1$.
Therefore we fix a $\bar c \in M^{k - 1}$ such that $\pi^{-1}(\bar c)$ is infinite and let $Q \subseteq M$ be the set of $b$ such that $(b, \bar c) \in A$.
For all $b \in Q$ there is a $U \in \mathcal B$ such that $\{ b \} \times U[\bar c] \subseteq A$.
Given $U \in \mathcal B$ we let $P_U \subseteq Q$ be the set of $b$ such that $\{b\} \times U[\bar c] \subseteq A$.
If $U,V, W \in \mathcal B$ and $W \subseteq U \cap V$ then $P_U \cup P_V \subseteq P_W$.
Thus $\{ P_U : U \in \mathcal B\}$ is a directed definable family of subsets of $Q$.
It follows that for every $n$ there is a $U \in \mathcal B$ such that $|P_U| \geq n$.
As $M$ eliminates $\exists^{\infty}$ there is a $U \in \mathcal B$ such that $P_U$ is infinite.
Fix such a $U$.
As $P_U$ has non-empty interior there is an open $V \subseteq P_U$.
Then $V \times U \subseteq A$.
Thus $A$ has non-empty interior.
\end{proof}
The next lemma gives a converse to Lemma~\ref{lem:fulldim}.
\begin{Lem}\label{lem:dprank}
The following are equivalent:
\begin{enumerate}
\item $A$ has dp-rank $k$;

\item there are sequences of pairwise distinct singletons $I_l = (a^l_i : i<\omega)$ for $l<k$ such that $I_0\times \cdots \times I_{k-1} \subseteq A$;

\item there are mutually $C$-indiscernible sequences of pairwise distinct singletons $I_l = (a^l_i : i<\omega)$, $l<k$, such that $I_0\times \cdots \times I_{k-1} \subseteq A$;

\item $A$ has non-empty interior;

\item $\dim(A) = k$.
\end{enumerate}
\end{Lem}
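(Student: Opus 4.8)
The plan is to prove the circle of implications $(1)\Rightarrow(4)\Rightarrow(3)\Rightarrow(2)\Rightarrow(1)$ together with $(4)\Leftrightarrow(5)$. Of these, $(1)\Rightarrow(4)$ and $(5)\Rightarrow(4)$ are exactly the content of Lemma~\ref{lem:fulldim}, and $(3)\Rightarrow(2)$ is immediate since a mutually $C$-indiscernible family of sequences of pairwise distinct singletons is in particular such a family. So the real work lies in $(4)\Rightarrow(3)$, $(2)\Rightarrow(1)$, and $(4)\Rightarrow(5)$.

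For $(2)\Rightarrow(1)$ I would exhibit a dp-pattern of depth $k$ directly from the grid. Writing the first variable of each $\phi_l$ as a $k$-tuple $x=(x_0,\dots,x_{k-1})$, take $\phi_l(x,y)$ to be the formula ``$x_l=y$'' and put $B_l=\{a^l_i:i<\omega\}$, which is infinite because the $a^l_i$ are pairwise distinct. Given $(b_0,\dots,b_{k-1})\in B_0\times\cdots\times B_{k-1}$, write $b_l=a^l_{j_l}$ and set $a=(a^0_{j_0},\dots,a^{k-1}_{j_{k-1}})$; then $a\in A$ by hypothesis, and for $y\in B_l$ one has $\mathcal M\models\phi_l(a,y)$ exactly when $y=b_l$. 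Hence $\dpr(A)>k-1$; together with $\dpr(A)\le k$, which follows from monotonicity and subadditivity of dp-rank and $\dpr(M)=1$, this gives $\dpr(A)=k$.

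For $(4)\Rightarrow(3)$: if $A$ has non-empty interior it contains a box $U[p_0]\times\cdots\times U[p_{k-1}]$ for some entourage $U$ and some $(p_0,\dots,p_{k-1})\in A$, and each $U[p_l]$ is infinite since $M$ is Hausdorff with no isolated points. Choosing $\omega$ pairwise distinct elements in each $U[p_l]$ gives sequences $(I_l)_{l<k}$ whose grid lies in the box, hence in $A$. Now I would apply the standard extraction (Ramsey plus compactness): from $(I_l)_{l<k}$ one obtains mutually $C$-indiscernible sequences $(J_l)_{l<k}$ that are \emph{locally based} on $(I_l)_{l<k}$ over $C$, meaning that any finite tuple built from one block of each $J_l$ has the same type over $C$ as a finite tuple built, in the same order, from one block of each $I_l$. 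Applying this to pairs of elements of a single $J_l$ shows the $J_l$ are sequences of pairwise distinct elements, and applying it to single elements, one from each $J_l$, shows that every point of $J_0\times\cdots\times J_{k-1}$ realizes over $C$ the type of a point of $I_0\times\cdots\times I_{k-1}$, hence lies in $A$ because $A$ is $C$-definable. This is $(3)$.

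For $(4)\Rightarrow(5)$ I would run the same construction but over a larger base. Let $\bar d$ be a finite tuple of parameters defining a box $U[p_0]\times\cdots\times U[p_{k-1}]\subseteq A$ with each $U[p_l]$ infinite, and perform the extraction over $C\bar d$: this yields mutually $C\bar d$-indiscernible sequences $J_l=(c^l_i:i<\omega)$ of pairwise distinct elements with each $J_l\subseteq U[p_l]$ (by local basedness, as $U[p_l]$ is $C\bar d$-definable), so $J_0\times\cdots\times J_{k-1}\subseteq A$. Consider $\bar a=(c^0_0,\dots,c^{k-1}_0)\in A$. For each $l$, $J_l$ is indiscernible over $C\bar d$ together with $\bigcup_{l'\neq l}J_{l'}$, in particular over $C\bar d$ together with all coordinates of $\bar a$ other than the $l$-th; since $J_l$ consists of infinitely many pairwise distinct elements realizing the same type over that set, none is algebraic over it, so $c^l_0\notin\acl(C\bar d,\bar a\setminus c^l_0)$. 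Hence no proper subtuple of $\bar a$ has $\bar a$ in its algebraic closure over $C\bar d$, so $\dim(\bar a/C\bar d)=k$; since enlarging the base cannot increase $\acl$-dimension, $\dim(\bar a/C)\ge k$, and as $\bar a\in M^k$ this forces $\dim(\bar a/C)=k$ and therefore $\dim(A)=k$. The step I expect to be the main obstacle is the extraction of mutually indiscernible sequences locally based on the given ones: it is this refinement, rather than bare mutual indiscernibility, that simultaneously provides pairwise distinctness of the new sequences and keeps the grid inside $A$. A second subtle point is that $\acl$ is not assumed to satisfy exchange, so the generic tuple in $(4)\Rightarrow(5)$ cannot be built one coordinate at a time; the indiscernibility of each $J_l$ over the base together with the remaining sequences is precisely what substitutes for exchange.
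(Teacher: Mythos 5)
Your proof is correct and follows essentially the same route as the paper: both reduce the hard implications into (4) to Lemma~\ref{lem:fulldim}, obtain (3) from (4) by extracting mutually $C$-indiscernible sequences from a definable box inside $A$, note (3)$\Rightarrow$(2) trivially, and get (2)$\Rightarrow$(1) from the same inp/ict-pattern $x_l = a^l_i$. The only difference is organizational: the paper closes the cycle through ``(2) implies (5)'' (left as easy), whereas you prove (4)$\Rightarrow$(5) directly by working over the enlarged base $C\bar d$ and using non-algebraicity along the mutually indiscernible sequences, which amounts to the same underlying argument spelled out in more detail.
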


\begin{proof}
Lemma~\ref{lem:fulldim} shows that both (1) and (5) imply (4).
If $A$ has non-empty interior then there are definable open $U_0,\ldots,U_{k - 1} \subseteq M$ such that $U_0 \times \ldots \times U_{k - 1} \subseteq A$, it easily follows that (4) implies (3).
It is obvious that (3) implies (2) and easy to see that (2) implies (5).
It remains to show that (2) implies (1).
If there are sequences as in (2), then we obtain an inp-pattern of size $k$ by considering the formulas $\phi_l (x;a^l_i) := (x=a^l_i)$.
Therefore (2) implies (1).
\end{proof}
Now we can prove:
\begin{Prop}\label{Prop:coin}
The $\acl$-dimension, naive dimension and dp-rank of $A$ coincide.
\end{Prop}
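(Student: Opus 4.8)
The plan is to prove that the three notions agree by establishing the cyclic chain of inequalities $n_{\mathrm{naive}}(A)\le\dpr(A)\le\dim(A)\le n_{\mathrm{naive}}(A)$, where $n_{\mathrm{naive}}(A)$ denotes the naive topological dimension and $\dim(A)$ the $\acl$-dimension. The whole argument reduces each comparison to the top-dimensional case already settled in Lemma~\ref{lem:dprank}, and the one point to watch is that, since $\acl$ need not satisfy exchange in our setting, $\acl$-dimension is not visibly monotone under coordinate projections; so the comparisons involving $\dim$ have to be handled ``from below''. For the first inequality, suppose $\pi\colon M^k\to M^l$ is a coordinate projection with $\pi(A)$ of nonempty interior. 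Applying Lemma~\ref{lem:dprank} to $\pi(A)\subseteq M^l$ gives $\dpr(\pi(A))=l$, and since $\pi$ restricts to a definable surjection $A\to\pi(A)$ and dp-rank does not increase when passing to a definable image, $l=\dpr(\pi(A))\le\dpr(A)$; taking the supremum over such $\pi$ yields $n_{\mathrm{naive}}(A)\le\dpr(A)$. (If one prefers to avoid quoting the behavior of dp-rank under images, Lemma~\ref{lem:dprank}(2) provides infinite families whose product lies in $\pi(A)$, and the formulas ``$x\in A\wedge(\pi(x))_j=a^j_i$'' then give an inp-pattern of depth $l$ inside $A$ directly.)

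For $\dpr(A)\le\dim(A)$ I would use the standard fact that $\dpr(A)=\max\{\dpr(\bar a/C):\bar a\in A\}$ for a $C$-definable set (see \cite[Chapter~4]{Simon:book}). Fix $\bar a\in A$, set $m:=\dim(\bar a/C)$, and choose a subtuple $\bar a'\subseteq\bar a$ of length $m$ with $\bar a\in\acl(C\bar a')$. Then $\bar a\bar a'$ and $\bar a'$ are interalgebraic over $C$, so $\dpr(\bar a/C)\le\dpr(\bar a\bar a'/C)=\dpr(\bar a'/C)\le\dpr(M^m)\le m$, using monotonicity of dp-rank under subtuples, its invariance under interalgebraicity, subadditivity, and dp-minimality. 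Taking the maximum over $\bar a\in A$ gives $\dpr(A)\le\dim(A)$.

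The remaining inequality $\dim(A)\le n_{\mathrm{naive}}(A)$ is the crux, and this is exactly where one must not try to push $\acl$-dimension down along a projection. Instead, set $l:=\dim(A)$, pick $\bar a\in A$ with $\dim(\bar a/C)=l$, and choose a subtuple $\bar a'\subseteq\bar a$ of length $l$ with $\bar a\in\acl(C\bar a')$. Transitivity of $\acl$ forces $\dim(\bar a'/C)=l$ as well: a length-$l'$ subtuple $\bar a''\subseteq\bar a'$ witnessing $\dim(\bar a'/C)\le l'$ would, by transitivity, also witness $\dim(\bar a/C)\le l'$, so $l'\ge l$. Now let $\pi\colon M^k\to M^l$ be the projection onto the coordinate positions occupied by $\bar a'$; then $\bar a'=\pi(\bar a)\in\pi(A)\subseteq M^l$, so $\dim(\pi(A))=l$, and Lemma~\ref{lem:dprank} applied to $\pi(A)$ shows that $\pi(A)$ has nonempty interior. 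Hence $n_{\mathrm{naive}}(A)\ge l$, and the three inequalities close the loop. I expect the real content to be precisely this last step: everything else is a matter of assembling standard facts about dp-rank, but the correct route from $\acl$-dimension to naive dimension is to extract an $\acl$-independent subtuple inside a single point of $A$ and read off full-dimensionality of its image through Lemma~\ref{lem:dprank}, rather than manipulating $\acl$-dimension under projections, where exchange would be needed.
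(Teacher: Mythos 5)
Your proof is correct, but it takes a partly different route from the paper's, so a comparison is worth recording. The paper shows, for each $n$, the equivalence of ``naive dimension $\geq n$'', ``$\dim(A)\geq n$'' and ``$\dpr(A)\geq n$'': from a coordinate projection with open image it passes back up to $A$ using the asserted fact that coordinate projections do not increase $\acl$-dimension or dp-rank, and in the converse directions it produces a coordinate projection onto $M^n$ with full-dimensional image (for dp-rank this is the cited \cite[Corollary 3.5]{invdp}) and then applies Lemma~\ref{lem:fulldim}. You instead close the cycle: naive dimension $\le \dpr(A) \le \dim(A) \le$ naive dimension. The two genuine differences are these. First, you replace the appeal to \cite[Corollary 3.5]{invdp} by a direct pointwise comparison $\dpr(A)\le\dim(A)$, using $\dpr(A)=\max\{\dpr(\bar a/C):\bar a\in A\}$ together with invariance of dp-rank under interalgebraicity, subadditivity and dp-minimality; this is more self-contained on that side, at the cost of invoking the (standard, \cite[Chapter 4]{Simon:book}) identification of the dp-rank of a definable set with the maximal dp-rank of its realizations, which the paper never needs. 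Second, you deliberately avoid monotonicity of $\acl$-dimension under coordinate projections, which the paper states before its proof and uses to get $\dim(A)\geq n$ from $\dim(\pi(A))=n$; since $\acl$ may fail exchange, that monotonicity is not obvious a priori (it follows a posteriori from the proposition itself), and your sandwich makes it unnecessary. Your transitivity argument showing $\dim(\bar a'/C)=\dim(\bar a/C)$ for the witnessing subtuple also supplies the detail behind the paper's unelaborated claim that some coordinate projection satisfies $\dim(\pi(A))=n$. Both arguments ultimately reduce to the full-dimensional case via Lemma~\ref{lem:fulldim} and Lemma~\ref{lem:dprank}, and your first inequality (an explicit ict-pattern pulled back through $\pi$, or monotonicity of dp-rank under definable images) matches the paper's treatment of that direction.
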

In the following proof we apply the fact that coordinate projections do not increase $\acl$-dimension or dp-rank.
\begin{proof}
We prove the proposition by showing that the following are equivalent for all $n$:
\begin{enumerate}
\item the naive dimension of $A$ is at least $n$;
\item $\dim(A) \geq n$;
\item $\dpr(A) \geq n$.
\end{enumerate}
If $\pi: M^k \rightarrow M^n$ is a coordinate projection such that $\pi(A)$ has non-empty interior then Lemma~\ref{lem:dprank} implies 
$\dim \pi(A) = \dpr \pi(A) = n,$ so $\dim(A) \geq n$ and $\dpr(A) \geq n$.
Thus (1) implies both (2) and (3).
Suppose that $\dim(A) \geq n$.
There is a coordinate projection $\pi : M^k \to M^n$ such that $\dim(\pi(A))=n$.
Lemma~\ref{lem:fulldim} implies that $\pi(A)$ has non-empty interior so the naive dimension of $A$ is at least $n$.
Thus (2) implies (1).
Suppose that $\dpr(A) \geq n$.
By \cite[Corollary 3.5]{invdp}, there is a coordinate projection $\pi: M^k \to M^n$ such that $\dpr \pi(A)=n$.
Lemma~\ref{lem:fulldim} implies that $\pi(A)$ has non-empty interior, so the naive dimension of $A$ is at least $n$.
Thus (3) implies (2).
\end{proof}
The following corollary was proven in a more general setting \cite{invdp}.
We include the easy topological proof that works in this setting.
\begin{Cor}
Let $\{D_x : x\in M^l\}$ be a definable family of subsets of $M^k$. Then for any $d\leq k$, the set of parameters $x\in M^l$ for which $\dim(D_x)=d$ is definable.
\end{Cor}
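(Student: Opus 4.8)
The plan is to reduce everything to \emph{naive topological dimension} via Proposition~\ref{Prop:coin}, and then observe that "having nonempty interior" is expressible by a single first-order formula uniformly in parameters, precisely because $\mathcal B$ is a \emph{definable} uniform structure.

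First I would isolate the key definability fact: if $\{E_x : x \in M^l\}$ is any definable family of subsets of $M^d$, then $\{ x \in M^l : E_x \text{ has nonempty interior}\}$ is definable. Write $\mathcal B = \{\varphi(M^2,\bar c) : \bar c \in D\}$ as in the conventions, and set $U_{\bar c} := \varphi(M^2,\bar c)$. For $y = (y_1,\dots,y_d) \in M^d$ we have $U_{\bar c}[y] = \{ z \in M^d : \bigwedge_{i<d}\varphi(y_i,z_i,\bar c)\}$, so $U_{\bar c}[y] \subseteq E_x$ is expressed by $\forall z\,\big(\bigwedge_{i<d}\varphi(y_i,z_i,\bar c) \to z \in E_x\big)$. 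Since $\{U_{\bar c} : \bar c \in D\}$ is a neighborhood basis, $E_x$ has nonempty interior if and only if there are $\bar c \in D$ and $y \in E_x$ with $U_{\bar c}[y] \subseteq E_x$; existentially quantifying the displayed formula over $\bar c \in D$ and $y \in E_x$ gives a formula in $x$.

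Next, for each of the finitely many coordinate projections $\pi : M^k \to M^d$, the family $\{\pi(D_x) : x \in M^l\}$ is again a definable family of subsets of $M^d$, so by the previous paragraph $\{x : \pi(D_x) \text{ has nonempty interior}\}$ is definable. By the definition of naive topological dimension, the naive dimension of $D_x$ is at least $d$ exactly when $\pi(D_x)$ has nonempty interior for at least one such $\pi$; hence $\{x : \text{naive }\dim(D_x) \geq d\}$ is definable for every $d$. Proposition~\ref{Prop:coin} identifies naive dimension with $\acl$-dimension, so $\{x : \dim(D_x) \geq d\}$ is definable for all $d \leq k$, and therefore so is
$$\{x : \dim(D_x) = d\} = \{x : \dim(D_x) \geq d\} \setminus \{x : \dim(D_x) \geq d+1\},$$
with the convention that $\dim(D_x) \geq k+1$ never holds.

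There is no serious obstacle here: the one point that must be handled correctly is that definability of the uniform structure is exactly what lets "nonempty interior" become a single formula, and the passage to a finite disjunction over coordinate projections is legitimated by Proposition~\ref{Prop:coin}.
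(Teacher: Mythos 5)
Your proposal is correct and follows the paper's own argument: the paper likewise observes that naive topological dimension is definable in families (since $\dim(D_x)\geq d$ holds exactly when some coordinate projection of $D_x$ to $M^d$ has nonempty interior, a condition made first-order by the definability of $\mathcal B$) and then invokes Proposition~\ref{Prop:coin}. You simply spell out the uniform-structure details that the paper leaves implicit.
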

\begin{proof}
The naive topological dimension is definable in families: $\dim(D_x)\geq d$ just if there is a coordinate projection of $D_x$ to some $M^d$ with non-empty interior.
\end{proof}

We say that a definable $B \subseteq A$ is \textbf{almost all} of $A$ if $\dim(A \setminus B) < \dim(A)$.
We say that a property holds \textbf{almost everywhere} on $A$ if it holds on a definable subset of $A$ which is almost all of $A$.
If $A$ is open and $A \setminus B$ has empty interior in $A$ then if follows from Lemma~\ref{lem:fulldim} that $B$ is almost all of $A$.

\begin{Lem}\label{lem:dense}
Suppose that $A$ is open.
Suppose that $B \subseteq A$ is definable and dense in $A$.
Then the interior of $B$ is dense in $A$ and $B$ is almost all of $A$.
\end{Lem}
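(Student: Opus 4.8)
The plan is to reduce both assertions to Lemma~\ref{lem:fulldim}, which says that a definable subset of $M^k$ has nonempty interior exactly when its dimension equals $k$, together with the observation recorded just above the statement: for open $A$, if $A\setminus B$ has empty interior in $A$ then $B$ is almost all of $A$. We may assume $A\neq\emptyset$, so that $\dim(A)=k$ (by Lemma~\ref{lem:dprank}).

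First I would settle the ``almost all'' assertion. Since $B$ is dense in $A$, no nonempty open subset of $A$ is disjoint from $B$; hence $A\setminus B$ has empty interior relative to $A$, and since $A$ is open in $M^k$ this is the same as $A\setminus B$ having empty interior in $M^k$. By the cited observation (equivalently, by the contrapositive of Lemma~\ref{lem:fulldim} applied to $A\setminus B$) we get $\dim(A\setminus B)<k=\dim(A)$, i.e.\ $B$ is almost all of $A$.

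For the density of $\interior{B}$ in $A$, I would fix an arbitrary nonempty open $V\subseteq A$ and aim to produce a point of $\interior{B}$ inside $V$. Because $V$ is open in $M^k$, every nonempty open subset of $V$ is also a nonempty open subset of $A$ and therefore meets $B$; thus $V\cap B$ is dense in the open set $V$. Applying the ``almost all'' statement just proved, with $V$ in place of $A$ and $V\cap B$ in place of $B$, gives $\dim(V\setminus(V\cap B))<\dim(V)=k$, hence $\dim(V\cap B)=k$, and then Lemma~\ref{lem:fulldim} shows that $V\cap B$ has nonempty interior in $M^k$. Any such interior is an open subset of $M^k$ contained in both $B$ and $V$, so it lies in $\interior{B}\cap V$, which is consequently nonempty. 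Since $V$ was an arbitrary nonempty open subset of $A$, $\interior{B}$ is dense in $A$.

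I do not anticipate a genuine obstacle here; the argument is essentially bookkeeping. The two points that need a little care are: (i) interiors taken relative to $A$ (and to $V$) agree with interiors taken in $M^k$, which holds precisely because $A$ and $V$ are open; and (ii) density of $B$ in $A$ passes down to density of $V\cap B$ in $V$ for open $V\subseteq A$, which is what lets us reuse the ``almost all'' conclusion locally.
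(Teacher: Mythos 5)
Your argument is correct, but it takes a genuinely different route from the paper. The paper also reduces everything to showing that a definable set dense in a nonempty definable open set has nonempty interior, but it proves that claim \emph{directly on} $B$: it fixes a definable open box $V_1\times\cdots\times V_k\subseteq A$, countable sets $I_i\subseteq V_i$ whose points are separated by a single entourage $W$ (saturation), and uses density to pick, for each $\bar a\in I_1\times\cdots\times I_k$, a point of $B$ in $W[\bar a]$; the formulas $x\in W[y_i]$ then give an inp-pattern witnessing $\dpr(B)=k$, and Lemma~\ref{lem:dprank} yields interior. You instead obtain $\dim(B)=k$ indirectly through the complement: density forces $\interior{A\setminus B}=\emptyset$, the contrapositive of Lemma~\ref{lem:fulldim} gives $\dim(A\setminus B)<k$, the union property of $\acl$-dimension (recorded in the proof of Lemma~\ref{lem:fulldim}) gives $\dim(B)=k$, and Lemma~\ref{lem:fulldim} again gives interior; then you localize to open subsets. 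Your version is softer and shorter, piggybacking entirely on the dimension--interior equivalences already proved, while the paper's construction works hands-on inside $B$ and exhibits an explicit dp-rank witness there. One small repair to your localization step: you quantify over an \emph{arbitrary} nonempty open $V\subseteq A$, but your ``almost all'' statement and the dimension machinery apply only to definable sets, so you should take $V$ definable, e.g.\ the interior of a ball $U[p]\subseteq A$ with $U\in\cB$; since such sets form a neighborhood basis, density against them suffices, and this is exactly why the paper fixes a \emph{definable} open $V$.
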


\begin{proof}
It suffices to show that the interior of $B$ is dense in $A$.
We fix a definable open $V \subseteq A$ and show that $B$ has non-empty interior in $V$.
We only consider the case $V = A$, the general case follows in the same way.
It thus suffices to show that $B$ has nonempty interior.
For $i \leq k$ let $V_i \subseteq M$ be non-empty open definable sets such that $V_1 \times \ldots \times V_k \subseteq A$.
 For each $i \leq k$ we fix some countably infinite $I_i \subseteq V_i$.
 Applying saturation we take $W\in \cB$ such that, for each $i$, the neighborhoods $W[a]$, $a\in I_i$ are pairwise disjoint.
Then for any choice of $\bar{a} = (a_1,\ldots,a_k) \in I_1 \times \ldots \times I_k$, there is a $\bar{y} \in B \cap W[\bar{a}]$, i.e. there is a $(y_1,\ldots, y_k) \in B$ such that $y_i \in W[a_i]$ holds for every $i$.
For $i \leq k$ let $\phi_i(x, \bar{y})$ be given by $x \in W[y_i]$ where $x$ ranges over $M$ and $\bar{y} = (y_1,\ldots,y_k)$ ranges over $B$.
For every $(a_1,\ldots,a_k) \in I_1 \times \ldots \times I_k$ there is a $\bar{y} \in B$ such that for each $i \leq k$ and $b \in I_i$, $\phi_i(b, \bar{y})$ holds if and only if $b = a_i$.
Thus the formulas $\phi_i(x, \bar{y})$ witness $\dpr(B)=k$. 
Lemma \ref{lem:dprank} shows that $B$ has non-empty interior.
\end{proof}
The following corollary will prove useful:
\begin{Cor}\label{corollary:partition}
Suppose that $A$ is open and let $A_1,\ldots, A_n$ be definable sets which cover $A$.
There is an $i \leq n$ such that $A_i$ has non-empty interior in $A$.
In fact, almost every point in $A$ is in the interior of some $A_i$.
\end{Cor}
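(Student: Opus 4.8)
The plan is to prove the stronger ``almost everywhere'' assertion and read the first sentence off from it. First I would replace each $A_i$ by $A_i \cap A$, which is harmless since $A$ is open, so that we may assume $A_i \subseteq A$ for all $i$. Set
\[
B := \bigcup_{i \leq n} \interior{A_i} \ \subseteq\ A .
\]
This is open, and it is definable because $\mathcal B$ is a definable family: ``$x \in \interior{A_i}$'' is the definable condition ``$(\exists U \in \mathcal B)\ U[x] \subseteq A_i$''. Everything then reduces to the claim that $A \setminus B$ has \emph{empty interior in $A$}. Indeed, granting this, the remark just before Lemma~\ref{lem:dense} gives that $B$ is almost all of $A$, which is exactly the ``in fact'' clause; and since $\dim(A \setminus B) < \dim(A)$ we cannot have $B = \emptyset$, so some $\interior{A_i}$ is a nonempty open subset of $A$ contained in $A_i$, which is the first sentence.

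To prove the claim I would argue by contradiction: suppose $V \subseteq A \setminus B$ is open and nonempty. The sets $\closure{A_i} \cap V$ ($i \leq n$) are relatively closed in $V$ and cover $V$. By the elementary fact, valid in any topological space, that a finite union of closed sets with empty interior again has empty interior (a one-line induction: if $C_1,C_2$ are closed with empty interior and $C_1 \cup C_2 \supseteq W$ with $W$ open nonempty, then $W \setminus C_1$ is open, nonempty, and contained in $C_2$), some $\closure{A_i} \cap V$ must have nonempty interior in $V$; let $V'$ be that interior, a nonempty open set contained in $\closure{A_i}$. Since $V'$ is open we get $V' \subseteq \closure{A_i \cap V'}$, i.e.\ $A_i \cap V'$ is dense in $V'$.

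Now Lemma~\ref{lem:dense}, applied with $V'$ in the role of the ambient open set and $A_i \cap V'$ in the role of the dense definable subset, shows that $\interior{A_i \cap V'}$ is dense in $V'$, hence nonempty. But $V'$ is open, so $\interior{A_i \cap V'} = \interior{A_i} \cap V' \subseteq B$, whereas $V' \subseteq V \subseteq A \setminus B$; therefore $\interior{A_i} \cap V' = \emptyset$, a contradiction. This completes the argument, and the first sentence follows as explained above.

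I do not expect any serious obstacle: the only thing to be careful about is the bookkeeping of relative interiors and closures — in particular that $V'$ is genuinely open in $M^k$ (it is, since $V$ is open in $M^k$ and $A$ is open), which is exactly what makes the identity $\interior{A_i \cap V'} = \interior{A_i} \cap V'$ legitimate. All of the real content is already packaged in Lemma~\ref{lem:dense}; the rest is point-set topology.
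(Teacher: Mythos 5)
Your argument is correct and is essentially the paper's own proof run in contrapositive form: both reduce to the elementary fact that a nonempty open set cannot be covered by finitely many nowhere dense sets, then apply Lemma~\ref{lem:dense} to upgrade ``$A_i$ dense in some open subset'' to ``$A_i$ has nonempty interior there'', and finally deduce the almost-everywhere statement from Lemma~\ref{lem:fulldim}. One cosmetic point: Lemma~\ref{lem:dense} is stated under the standing convention that the ambient open set is definable, so you should take $V$ to be a definable ball $U[x] \subseteq A \setminus B$ (such balls form a basis), which makes $V'$ definable as well; nothing else in your argument changes.
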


\begin{proof}
We fix a definable open $V \subseteq U$ and show that $V$ contains a point in the interior of some $A_i$.
There is an $i \leq n$ such that $A_i$ is dense in some open subset of $V$ as otherwise the union of the $A_i$ is nowhere dense.
Lemma~\ref{lem:dense} implies that this $A_i$ has non-empty interior in $V$.
\end{proof}

\section{Correspondences and Generic Continuity}
In this section we prove Proposition~\ref{proposition:continuity} which shows that a definable function $M^k \rightarrow M^l$ is continuous almost everywhere.
We prove a stronger result which, loosely speaking, states that definable ``multi-valued functions'' are continuous almost everywhere.
We first introduce the notion of a ``multi-valued function'' that we will use.
\subsection{Correspondences}
A \textbf{correspondence} $f: E \rightrightarrows F$ consists of definable sets $E,F$ together with a definable subset $\gr(f)$ of $E \times F$ such that:
$$ 0 < | \{ y \in F : (x,y) \in \gr(f) \} | < \infty \quad \text{for all } x \in E. $$
Let $f: E \tto F$ be a correspondence.
Given $x \in E$ we let $f(x)$ be the set of $y \in F$ such that $(x,y) \in \gr(f)$. Note that saturation implies that there is a $n \in \mathbb{N}$ such that $|f(x)| \leq n$ for all $x$.
The \textbf{image} of $f$ is the coordinate projection of $\gr(f)$ onto $F$.
Given a definable $B \subseteq E$ we let $f|_B$ be the correspondence $B \tto F$ whose graph is $\gr(f) \cap [B \times F]$.
We say that $f$ is \textbf{constant} if $f(x) = f(x')$ for all $x,x' \in E$.
If $|f(x)| = m$ for every $x \in E$, then we say that $f$ is an \textbf{$m$-correspondence}.
Given correspondences $f: E \tto F$ and $g: F \tto G$ we define the composition $f \circ g: E \tto G$ to be the correspondence such that
$$ \gr(f\circ g) = \gr(f) \circ \gr(g). $$
Given $U \in \cB$ we say that $(f(x),f(x')) \in U$ if for every $y \in f(x)$ there is a $y' \in f(x')$ such that $(y,y') \in U$ and for every $y' \in f(x')$ there is a $y \in f(x)$ such that $(y,y') \in U$.
We say that $f$ is \textbf{continuous} at $x \in E$ if for every $V \in \cB$ there is a $U \in \cB$ such that $(f(x),f(x')) \in V$ whenever $(x,x') \in U$.
Note that a continuous $1$-correspondence is a continuous function.
In the remainder of this paragraph we prove several simple lemmas about correspondences which will be useful.

\begin{Lem}\label{lem:split}
Let $U \subseteq M^k$ be open and definable and let $f: U \tto M^l$ be a continuous $m$-correspondence.
Every $p \in U$ has a neighborhood $V$ such that there are definable continuous functions $g_1,\ldots, g_m : V \to M^l$ such that $\gr(g_i) \cap \gr(g_j) = \emptyset$ when $i \neq j$ and
$$ \gr(f|_V) = \gr(g_1) \cup \ldots \cup \gr(g_m). $$
\end{Lem}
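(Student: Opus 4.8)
The plan is to "locally separate the $m$ branches of $f$ by choosing a sufficiently small entourage." Fix $p \in U$ and write $f(p) = \{q_1, \ldots, q_m\}$, these being $m$ distinct points of $M^l$. Since the topology is Hausdorff, first I would pick $W \in \cB$ small enough that the balls $W[q_1], \ldots, W[q_m]$ (in $M^l$) are pairwise disjoint; here I use axioms (3) and (4) for the uniform structure to shrink and split. Next, by continuity of $f$ at $p$, there is $U_0 \in \cB$ with $U_0[p] \subseteq U$ such that $(f(p), f(p')) \in W'$ whenever $(p,p') \in U_0$, where $W'$ is chosen so that $W' \circ W' \subseteq W$. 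The point of the ``back-and-forth'' definition of $(f(p), f(p')) \in W'$ is that for $p' \in U_0[p]$ each of the $m$ points of $f(p')$ lies in exactly one of the balls $W[q_i]$ (it lies in at least one by the forward clause, and in at most one since the $W[q_i]$ are disjoint), and conversely each ball $W[q_i]$ catches at least one point of $f(p')$ by the backward clause. Since $|f(p')| = m$ and there are $m$ balls, this forces exactly one point of $f(p')$ in each $W[q_i]$.

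Having done this, set $V = U_0[p]$, which is an open neighborhood of $p$ contained in $U$. For each $i \leq m$ define $g_i \colon V \to M^l$ by letting $g_i(p')$ be the unique element of $f(p') \cap W[q_i]$; its graph is $\gr(f|_V) \cap (V \times W[q_i])$, hence definable, and the $m$ graphs are pairwise disjoint (the $W[q_i]$ are disjoint) and cover $\gr(f|_V)$ by the counting argument above. So the displayed decomposition holds. Each $g_i$ is a $1$-correspondence, i.e.\ a function.

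Finally I must check that each $g_i$ is continuous on $V$. This is the step I expect to require the most care, but it is still routine: fix $p' \in V$ and $V_1 \in \cB$; I want $U_1 \in \cB$ so that $(g_i(p'), g_i(p'')) \in V_1$ for all $p'' \in U_1[p']$. Shrinking, choose $W_1 \in \cB$ with $W_1 \subseteq W$ such that the balls $W_1[g_1(p')], \ldots, W_1[g_m(p')]$ are still pairwise disjoint and moreover $W_1[g_i(p')] \subseteq V_1[g_i(p')]$ — more precisely pick $W_1 \subseteq V_1$ with $W_1 \circ W_1 \subseteq W$. By continuity of $f$ at $p'$ there is $U_1 \in \cB$, $U_1[p'] \subseteq V$, with $(f(p'), f(p'')) \in W_1$ for $p'' \in U_1[p']$. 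Running the same back-and-forth argument with $W_1$ in place of $W$ and $p'$ in place of $p$: each point of $f(p'')$ lies in a unique ball $W_1[g_j(p')] \subseteq W[g_j(p')] = W[q_{\sigma(j)}]$, so the branch labelling is consistent, and the unique point of $f(p'') \cap W[q_i]$ — which is $g_i(p'')$ — is in fact the unique point in $W_1[g_i(p')] \subseteq V_1[g_i(p')]$. Hence $(g_i(p'), g_i(p'')) \in V_1$, as required. The main obstacle, such as it is, is simply bookkeeping the several nested applications of axioms (3) and (4) to get entourages small enough to keep the $m$ branches separated at both $p$ and a general nearby point; once the separation is arranged, definability and the decomposition are immediate.
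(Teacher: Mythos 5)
Your construction is exactly the paper's: separate the points of $f(p)=\{q_1,\ldots,q_m\}$ by pairwise disjoint $W$-balls, use continuity of $f$ at $p$ to get a neighborhood $V$ of $p$ on which $f(p')$ meets each $W[q_i]$ in exactly one point, and let $g_i(p')$ be that point; definability of the $g_i$, disjointness of their graphs, and the covering of $\gr(f|_V)$ then follow just as you say. (The paper dismisses continuity of the $g_i$ with ``follows easily from the continuity of $f$''; you try to spell that step out, and that is where the one flaw in your write-up lies. A separate, harmless point: $U_0[p]$ is a neighborhood of $p$ but need not be open in a uniform space; since the lemma only asks for a neighborhood, or since you may pass to the interior, this does not matter.)

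The continuity bookkeeping as written does not go through. The chain $W_1[g_j(p')]\subseteq W[g_j(p')]=W[q_{\sigma(j)}]$ uses a false identity: the $W$-ball centered at $g_j(p')$ need not be the $W$-ball centered at $q_j$, and from your choices you only get $W_1[g_j(p')]\subseteq (W'\circ W_1)[q_j]$, which is not inside $W[q_j]$ for the entourages you fixed. Likewise, pairwise disjointness of the balls $W_1[g_j(p')]$ does not follow from $W_1\circ W_1\subseteq W$: that would require $(g_i(p'),g_j(p'))\notin W$, and all you know is that these points lie in the disjoint balls $W[q_i]$, $W[q_j]$, i.e.\ $(q_i,q_j)\notin W\circ W$. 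The repair is short, and you in effect prepared for it by invoking continuity at $p$ with $W'$ satisfying $W'\circ W'\subseteq W$, so that $g_j(p')\in W'[q_j]$ for every $p'\in V$ and every $j$: when checking continuity of $g_i$ at $p'$ against a target $V_1$, choose $W_1\in\cB$ with $W_1\subseteq W'\cap V_1$ (no condition $W_1\circ W_1\subseteq W$, and no disjointness of the $W_1[g_j(p')]$, is needed). Then $W'\circ W_1\subseteq W'\circ W'\subseteq W$, hence $W_1[g_j(p')]\subseteq W[q_j]$ for every $j$; so if $p''\in U_1[p']\cap V$ with $(f(p'),f(p''))\in W_1$, the point of $f(p'')$ that is $W_1$-close to $g_i(p')$ lies in $W[q_i]$ and therefore equals $g_i(p'')$, giving $(g_i(p'),g_i(p''))\in W_1\subseteq V_1$. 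With this adjustment your argument coincides with the paper's proof.
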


\begin{proof}
Fix $p \in U$.
Let $f(p) = \{q_1,\ldots, q_m\}$.
Let $W_0 \in \mathcal B$ be such that $(q_i, q_j) \notin W_0$ for all $i,j \leq m$ such that $i \neq j$ and let $W \in \mathcal B$ be such that $W \circ W \subseteq W_0$.
Let $V$ be an open neighborhood of $p$ such that $(f(p), f(p')) \in W$ for all $p' \in V$.
Fix $p' \in V$ and let $f(p') = \{q'_1,\ldots, q'_m\}$.
For each $i \leq m$ there is a $j \leq m$ such that $(q_i, q'_j) \in W$.
As the balls $W[q_i]$ are pairwise disjoint we see that for each $i \leq m$ there is a unique $j \leq m$ such that $(q_i, q'_j) \in W$.
We have shown that for every $p' \in V$ and $q \in f(p)$ there is a unique $q' \in f(p')$ such that $(q,q') \in W$.
For $i \leq m$ we let $g_i: V \rightarrow M^l$ be the definable function such that $g_i(p') \in W[q_i]$ and $g_i(p') \in f(p')$ for every $p' \in V$. Continuity of the $g_i$'s follows easily from the continuity of $f$.
It is clear that the graphs of the $g_i$ are pairwise disjoint.
\end{proof}

\begin{Lem}\label{lem:split1}
Let $U \subseteq M^k$ be open and definable and let $f: U \tto M^l$ be a continuous correspondence.
Almost every $p \in U$ has a neighborhood $V$ such that there are definable continuous functions $g_1,\ldots, g_m : V \to M^l$ such that $\gr(g_i) \cap \gr(g_j) = \emptyset$ when $i \neq j$ and
$$ \gr(f|_V) = \gr(g_1) \cup \ldots \cup \gr(g_m). $$
\end{Lem}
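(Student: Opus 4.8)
The plan is to reduce to Lemma~\ref{lem:split} by partitioning $U$ according to the size of the fibre $f(x)$. Recall from the discussion of correspondences above that there is an $n \in \mathbb{N}$ with $|f(x)| \leq n$ for all $x \in U$. For each $m \leq n$ set
$$ U_m := \{ x \in U : |f(x)| = m \}; $$
these are definable subsets of $U$ which cover $U$.

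By Corollary~\ref{corollary:partition}, almost every point of $U$ lies in $\interior{U_m}$ for some $m \leq n$. Let $U'$ be the union of the sets $\interior{U_m}$ for $m \leq n$; then $U'$ is open, definable, and almost all of $U$, so it suffices to show that every $p \in U'$ has a neighbourhood of the required form.

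Fix such a $p$ and choose $m$ with $p \in \interior{U_m}$. Since continuity of a correspondence is a local condition, the restriction $f|_{\interior{U_m}} : \interior{U_m} \tto M^l$ is again a continuous correspondence, and by construction $|f(x)| = m$ for every $x \in \interior{U_m}$; thus $f|_{\interior{U_m}}$ is a continuous $m$-correspondence on an open definable set. Applying Lemma~\ref{lem:split} to $f|_{\interior{U_m}}$ at the point $p$ produces an open neighbourhood $V \subseteq \interior{U_m}$ of $p$ together with definable continuous functions $g_1,\ldots,g_m : V \to M^l$ having pairwise disjoint graphs whose union is $\gr(f|_V)$, which is exactly what we want.

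I do not expect any real obstacle here: this is simply the ``generically constant fibre cardinality'' version of Lemma~\ref{lem:split}. The only things to verify are that $|f(x)|$ is uniformly bounded (already noted above), that passing to an open definable subset preserves continuity of a correspondence, and that Corollary~\ref{corollary:partition} really delivers ``almost every point'' and not merely ``some point'', so that the set $U'$ above is indeed almost all of $U$.
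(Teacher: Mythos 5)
Your proposal is correct and matches the paper's own proof: both partition $U$ into the sets where $|f(x)|$ is constant, invoke Corollary~\ref{corollary:partition} to conclude that almost every point lies in the interior of one of these pieces, and then apply Lemma~\ref{lem:split} to the restricted $m$-correspondence there. No gaps.
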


\begin{proof}
Let $m$ be such that $|f(p)| \leq m$ for all $p \in U$.
For each $i \leq m$ let $A_i \subseteq U$ be the set of $p$ such that
$ |f(p)| = i.$
By Corollary~\ref{corollary:partition} almost every element of $U$ is contained in the interior of some $A_i$.
An application Lemma~\ref{lem:split} shows that the conclusion of the lemma holds for any element of the interior of some $A_i$.
\end{proof}
The next lemma is a straightforward generalization of a familiar fact about graphs of continuous functions.
We leave the proof to the reader.
\begin{Lem}\label{lem:locclos}
Let $f: A \tto M^l$ be a continuous correspondence.
Then $\gr(f)$ is a closed subset of $A\times M^l$.
If $A$ is open then $\gr(f)$ is a locally closed subset of  $M^k \times M^l$.
\end{Lem}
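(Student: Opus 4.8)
The plan is to establish the two assertions in turn: first that $\gr(f)$ is closed in the subspace $A \times M^l$, via a direct point-chase with entourages, and then, when $A$ is open, that $\gr(f)$ is locally closed in $M^k \times M^l$ as a formal consequence.

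For closedness, I would fix a point $(x_0, y_0) \in A \times M^l$ in the closure of $\gr(f)$ and show $y_0 \in f(x_0)$, so that $(x_0,y_0) \in \gr(f)$. Write $f(x_0) = \{q_1, \ldots, q_n\}$, which is finite and nonempty since $x_0 \in A$. Arguing by contradiction, suppose $y_0 \notin f(x_0)$. Since the topology on $M^l$ is Hausdorff, I can separate $y_0$ from each $q_i$; intersecting the finitely many separating basic entourages and refining (axiom (3) for $\cB$), I obtain $W_0 \in \cB$ with $(y_0, q_i) \notin W_0$ for all $i \le n$, and then (axiom (4)) a $W \in \cB$ with $W \circ W \subseteq W_0$. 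Continuity of $f$ at $x_0$ applied to $W$ yields $U \in \cB$ with $(f(x_0), f(x')) \in W$ whenever $x' \in U[x_0]$; after replacing $W$ by a common refinement of $U$ and $W$, I may assume $W \subseteq U$. As $(x_0,y_0)$ lies in the closure of $\gr(f)$, the basic neighborhood $W[x_0] \times W[y_0]$ of $(x_0,y_0)$ meets $\gr(f)$: pick $(x',y') \in \gr(f)$ with $x' \in W[x_0] \subseteq U[x_0]$ and $y' \in W[y_0]$. Then $y' \in f(x')$, and since $(f(x_0), f(x')) \in W$ the definition of that relation supplies some $q_i \in f(x_0)$ with $(q_i, y') \in W$. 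Combining with $(y', y_0) \in W$ (symmetry, axiom (2)) gives $(q_i, y_0) \in W \circ W \subseteq W_0$, and one more use of symmetry contradicts $(y_0, q_i) \notin W_0$. Hence $y_0 \in f(x_0)$, so $\gr(f)$ is closed in $A \times M^l$.

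For the second assertion, suppose $A$ is open. Let $C$ be the closure of $\gr(f)$ in $M^k \times M^l$. By the first part $\gr(f)$ is closed in $A \times M^l$, so $C \cap (A \times M^l) = \gr(f)$. Since $A$ is open, $A \times M^l$ is open in $M^k \times M^l$, so $\gr(f)$ is the intersection of a closed set and an open set, hence locally closed (a subset of a topological space is locally closed precisely when it is such an intersection). The only step demanding any care is the separation in the first part, where one must juggle the uniform-structure axioms to produce a single entourage $W$ that simultaneously witnesses continuity and has $W \circ W$ avoiding all the pairs $(y_0, q_i)$; everything after that is bookkeeping.
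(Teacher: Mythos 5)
Your proof is correct: the entourage chase with $W\circ W\subseteq W_0$ establishes closedness of $\gr(f)$ in $A\times M^l$, and writing $\gr(f)=\cl(\gr(f))\cap(A\times M^l)$ with $A\times M^l$ open gives local closedness. The paper explicitly leaves this lemma's proof to the reader as a ``straightforward generalization of a familiar fact about graphs of continuous functions,'' and your argument is precisely the intended standard one.
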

The following lemma is well-known for continuous functions.
Lemma~\ref{lem:split} reduces Lemma~\ref{lem:localhomeo} to the case of a continuous function $f$.
We again leave the details to the reader.
\begin{Lem}\label{lem:localhomeo}
Suppose that $A$ is open and let $f: A \tto M$ be a continuous $m$-correspondence.
Let $\pi: A \times M \rightarrow A$ be the coordinate projection.
Then every $p \in \gr(f)$ has a neighborhood $V \subseteq \gr(f)$ such that $\pi(V)$ is open and the restriction of $\pi$ to $V$ is a homeomorphism onto its image.
\end{Lem}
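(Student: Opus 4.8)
The plan is to follow the hint: reduce to the case of a single continuous function via Lemma~\ref{lem:split}, and then invoke the elementary fact that the coordinate projection restricted to the graph of a continuous function with open domain is a homeomorphism onto that domain.

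First I would fix $p = (a,b) \in \gr(f)$, so $a \in A$ and $b \in f(a)$. Since $f$ is a continuous $m$-correspondence and $A$ is open, Lemma~\ref{lem:split} provides an open neighborhood $V' \subseteq A$ of $a$ together with definable continuous functions $g_1,\ldots,g_m \colon V' \to M$ with pairwise disjoint graphs such that $\gr(f|_{V'}) = \gr(g_1) \cup \cdots \cup \gr(g_m)$. Because $b \in f(a) = \{g_1(a),\ldots,g_m(a)\}$, there is some $i \leq m$ with $g_i(a) = b$; the candidate neighborhood is $V := \gr(g_i)$.

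Next I would check that $V$ really is an open neighborhood of $p$ inside $\gr(f)$. Since $V'$ is open in $A$, we have $\gr(f|_{V'}) = \gr(f) \cap (V' \times M)$ open in $\gr(f)$. By Lemma~\ref{lem:locclos} each $\gr(g_j)$ is closed in $V' \times M$, hence closed in $\gr(f|_{V'})$; as the $\gr(g_j)$ are finitely many, pairwise disjoint and cover $\gr(f|_{V'})$, each one is the complement of the union of the others and is therefore also open in $\gr(f|_{V'})$, hence open in $\gr(f)$. In particular $V = \gr(g_i)$ is an open neighborhood of $p$ in $\gr(f)$ contained in $\gr(f)$. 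Finally, $\pi(V) = V'$ is open in $A$, hence open in $M^k$ since $A$ is open; and $\pi|_V \colon \gr(g_i) \to V'$ is a continuous bijection whose inverse $x \mapsto (x, g_i(x))$ is continuous by continuity of $g_i$, so $\pi|_V$ is a homeomorphism onto its image.

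The only point requiring a moment's care — the ``main obstacle'' — is the verification that $\gr(g_i)$ is genuinely open in $\gr(f)$, rather than merely a locally closed piece of it; this is exactly what the clopen argument above supplies, using that finitely many pairwise disjoint closed sets covering a space are automatically each clopen. Everything else is routine, which is why the statement is left to the reader in the text.
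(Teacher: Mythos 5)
Your argument is correct and follows exactly the route the paper intends: apply Lemma~\ref{lem:split} to split $\gr(f)$ locally into graphs of continuous functions and then use the standard fact that projection restricted to the graph of a continuous function on an open set is a homeomorphism, which is precisely the reduction the paper sketches and leaves to the reader. Your additional clopen argument showing $\gr(g_i)$ is open in $\gr(f)$ is the right way to fill in the omitted detail.
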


\subsection{Generic Continuity}
In this section we prove Proposition~\ref{proposition:continuity} which states that a definable correspondence $M^k \tto M^l$ is continuous almost everywhere.
We first prove two lemmas which we use in the proof of Proposition~\ref{proposition:continuity} and in several other places.

\begin{Lem}\label{lem:poset}
Let $\mathcal C = \{ C_x : x \in M^l \}$ be a directed definable family of subsets of $M^k$.
If 
$$\bigcup_{ x \in M^l } C_x$$
has non-empty interior then there is an element of $\mathcal C$ with non-empty interior.
\end{Lem}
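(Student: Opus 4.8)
The plan is to exploit dp-minimality through the elimination of $\exists^\infty$ (Lemma~\ref{lemma:eliminateinfty}) together with directedness, mimicking the argument already used at the end of the proof of Lemma~\ref{lem:fulldim}. Set $U := \bigcup_{x \in M^l} C_x$ and assume $U$ has non-empty interior, so it contains a basic open box $V_1 \times \cdots \times V_k$. Fix countably infinite subsets $I_i \subseteq V_i$ for each $i \leq k$ and, by saturation, an entourage $W \in \cB$ such that for each $i$ the balls $W[a]$, $a \in I_i$, are pairwise disjoint.

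First I would reduce, for each tuple $\bar a \in I_1 \times \cdots \times I_k$, the question of which $C_x$ contains $\bar a$ to a question about a single parameter. Since $\bar a \in U$, there is some $x \in M^l$ with $\bar a \in C_x$. The family is directed, so for any finite set of tuples $\bar a^{(1)}, \ldots, \bar a^{(N)}$ lying in $U$ there is a single $x$ with all of them in $C_x$. I would like to run the $\exists^\infty$ argument as follows: for $U' \in \cB$ let $P_{U'}$ be the set of tuples $\bar a \in I_1 \times \cdots \times I_k$ such that $U'[\bar a] \subseteq C_x$ for some $x$; but this needs care because the witnessing $x$ may vary. The cleaner route is to work one coordinate at a time, or better, to directly count parameters: for each finite $F \subseteq I_1 \times \cdots \times I_k$ the directed hypothesis gives an $x_F$ with $F \subseteq C_{x_F}$, and then for each $\bar a \in F$ an entourage $U_{\bar a} \in \cB$ with $U_{\bar a}[\bar a] \subseteq C_{x_F}$; taking a common refinement (finitely many, so saturation is not even needed here) gives a single $U_F \in \cB$ with $U_F[\bar a] \subseteq C_{x_F}$ for all $\bar a \in F$. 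Thus for every $n$ there is an entourage $U$ and a parameter $x$ such that $U[\bar a] \subseteq C_x$ for at least $n$ distinct tuples $\bar a$.

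Now I would package this into a definable family to which Lemma~\ref{lemma:eliminateinfty} applies: consider, for $(U, x)$ ranging over $\cB \times M^l$, the definable set $Q_{U,x}$ of tuples $\bar a \in I_1 \times \cdots \times I_k$ (or rather in $M^k$, intersected with the relevant product of the $I_i$, which I should first arrange to be definable — this is the one spot requiring attention, and it is handled exactly as in Lemma~\ref{lem:dense} and Lemma~\ref{lem:fulldim} by choosing the $I_i$ and $W$ via saturation so that membership in the right "cell" is definable) such that $U[\bar a] \subseteq C_x$. By the previous paragraph these sets are arbitrarily large as $(U,x)$ varies, so by elimination of $\exists^\infty$ some $Q_{U,x}$ is infinite. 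Fix such a $U$ and $x$. Then $C_x$ contains $U[\bar a]$ for infinitely many $\bar a$ in the product of the $I_i$; projecting to each coordinate and using that an infinite definable subset of $M$ has non-empty interior (Assumption \textbf{(Inf)}), I get open sets and hence, as in the final lines of Lemma~\ref{lem:fulldim}, an open box $V \times U' \subseteq C_x$. Hence $C_x$ has non-empty interior, as desired.

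The main obstacle is purely bookkeeping: making the set "$\{\bar a : U[\bar a] \subseteq C_x\}$ restricted to the chosen countable grids $I_i$" into a genuine definable family in the parameters $(U,x)$, so that Lemma~\ref{lemma:eliminateinfty} can be invoked. This is resolved by the same device used in Lemma~\ref{lem:dense}: rather than working with the $I_i$ themselves, replace each $I_i$ by a definable set carved out using the entourage $W$ (so that $W[a]$, $a$ in the grid, are pairwise disjoint), and phrase everything in terms of $W$; then the relevant condition becomes a formula in $(U, x)$ and the $\exists^\infty$-elimination and \textbf{(Inf)} finish the argument. No genuinely new idea beyond what is already in Lemmas~\ref{lem:fulldim} and \ref{lem:dense} is needed.
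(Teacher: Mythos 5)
There is a genuine gap, and it occurs at the very first substantive step. From $F \subseteq C_{x_F}$ you claim that for each $\bar a \in F$ there is an entourage $U_{\bar a} \in \cB$ with $U_{\bar a}[\bar a] \subseteq C_{x_F}$. Nothing justifies this: knowing $\bar a \in C_{x_F}$ only says $\bar a$ is a point of $C_{x_F}$, not an interior point, and the sets $C_x$ are arbitrary definable sets --- whether any of them contains a ball is exactly what the lemma is asking you to prove, so this step is circular. (In the analogous passage at the end of Lemma~\ref{lem:fulldim} the inclusion $\{b\}\times U[\bar c] \subseteq A$ was legitimate only because $A$ had first been replaced by a set all of whose fibers $A_b$ are open; no such reduction is available here.) Note also that if you could produce even a single pair $(U,x,\bar a)$ with $U[\bar a] \subseteq C_x$ you would already be done, since $U[\bar a]$ is a neighborhood of $\bar a$; so the subsequent $\exists^\infty$ and projection machinery is beside the point, and the unresolved ``bookkeeping'' about making the countable grids definable never needs to arise in a correct argument.

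The actual proof is shorter and uses a different pair of tools: having fixed the box $U_1\times\cdots\times U_k$ inside the union and countable $I_i \subseteq U_i$, directedness says every \emph{finite} subset of $I_1\times\cdots\times I_k$ lies in a single $C_y$, and then saturation (not $\exists^\infty$) gives one parameter $y$ with the entire countable grid $I_1\times\cdots\times I_k \subseteq C_y$. At that point Lemma~\ref{lem:dprank} ((2) implies (4)) --- the dp-minimality statement that a definable set containing a product of infinite sequences of pairwise distinct singletons has non-empty interior --- finishes the proof. Your proposal never takes this compactness step over the whole grid and instead tries to manufacture balls inside the $C_x$ directly, which is where it breaks down; if you repair it by following the grid-plus-saturation route you will find you are reproving Lemma~\ref{lem:dprank} rather than needing \textbf{(Inf)} or Lemma~\ref{lemma:eliminateinfty} directly.
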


\begin{proof}
Suppose that the union of $\mathcal C$ has non-empty interior.
We show that there is a $k$-dimensional element of $\mathcal C$.
For $1 \leq i \leq k$ let $U_k$ be open definable subsets of $M$ such that
$$ U_1 \times \ldots \times U_k \subseteq \bigcup_{ x \in M^l } C_x.$$
For each $1 \leq i \leq k$ let $I_i \subseteq U_i$ be a countable set.
Let $I = I_1 \times \ldots \times I_k$.
As $\mathcal C$ is directed, for every finite $J \subseteq I$ there is a $y \in M^l$ such that $J \subseteq C_y$.
Saturation gives a $y \in M^l$ such that $I \subseteq C_y$.
Lemma \ref{lem:dprank} implies that this $C_y$ has non-empty interior in $M^k$.
\end{proof}
(\textbf{Inf}) implies that there are no infinite definable discrete subsets of $M$.
A straightforward inductive argument extends this to any $M^k$:
\begin{Lem}\label{lem:nodiscrete}
There is no infinite definable discrete subset of $M^k$.
\end{Lem}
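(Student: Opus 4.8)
The plan is to argue by induction on $k$, with the base case $k=1$ being exactly the hypothesis \textbf{(Inf)} (an infinite definable subset of $M$ is not discrete, since it has non-empty interior and $M$ has no isolated points). So suppose $k \geq 2$ and that $D \subseteq M^k$ is an infinite definable set; I want to produce an accumulation point of $D$ lying in $D$... wait, discreteness means having no accumulation points at all (in the ambient space), so I must find an accumulation point of $D$, which need not lie in $D$ — but actually for the inductive step it is cleanest to find a point of $D$ that is an accumulation point of $D$. Let me instead aim directly: I will show $D$ has an accumulation point in $M^k$.

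First I would look at the projection $\pi \colon M^k \to M^{k-1}$ onto the first $k-1$ coordinates and split into two cases according to the fibers. If some fiber $D_{\bar b} = \{ t \in M : (\bar b, t) \in D\}$ is infinite, then by \textbf{(Inf)} it has non-empty interior, hence has an accumulation point $t_0$ (using that $M$ has no isolated points), and then $(\bar b, t_0)$ is an accumulation point of $D$; done. Otherwise every fiber of $\pi|_D$ is finite, so by Lemma~\ref{lemma:eliminateinfty} there is a uniform bound $n$ on the fiber sizes, and then $\pi(D)$ is infinite. By the inductive hypothesis $\pi(D)$ is not discrete, so it has an accumulation point $\bar b \in M^{k-1}$. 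Now I need to promote this to an accumulation point of $D$ itself.

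The main obstacle is precisely this last step: an accumulation point of $\pi(D)$ need not be hit by any point of $D$, and even the finitely-many "sheets" of $D$ over a neighborhood of $\bar b$ could a priori fail to accumulate. The plan is to pick a sequence $\bar b_i \to \bar b$ of distinct points of $\pi(D)$, pick for each $i$ a point $c_i = (\bar b_i, t_i) \in D$, and analyze the $t_i \in M$. If the set $\{ t_i : i \in \omega\}$ has an accumulation point $t_0$, pass to a subsequence with $t_i \to t_0$ as well, so $c_i \to (\bar b, t_0)$ and we have our accumulation point of $D$. The remaining possibility is that $\{t_i\}$ is an infinite discrete subset of $M$ — but \textbf{(Inf)} forbids infinite discrete subsets of $M$, so this case cannot occur (after passing to a subsequence making the $t_i$ distinct; if they are not eventually distinct then some value repeats infinitely often and that value is trivially an accumulation point of $\{t_i\}$ in the relevant weak sense, and one pushes through the same way). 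This closes the induction. One should phrase the extraction of convergent subsequences purely in terms of "every neighborhood contains infinitely many points," since we are not in a metric space; saturation and the neighborhood-basis $\{U[x] : U \in \cB\}$ make this routine, and I would leave those verifications terse as the lemma statement invites ("A straightforward inductive argument").
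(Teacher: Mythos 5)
Your case split is sensible and the infinite-fiber case is fine, but the finite-fiber case --- which is the heart of the lemma --- has a genuine gap. After obtaining an accumulation point $\bar b$ of $\pi(D)$ you choose points $c_i=(\bar b_i,t_i)\in D$ and argue that the set $\{t_i : i<\omega\}$ either has an accumulation point or is ``an infinite discrete subset of $M$, which (\textbf{Inf}) forbids''. But (\textbf{Inf}) applies only to \emph{definable} sets, and your $\{t_i\}$ is a pointwise-chosen countable set with no reason to be definable; infinite discrete non-definable subsets of $M$ exist in abundance (by saturation one can pick infinitely many pairwise $U$-separated points inside any infinite definable set, for a fixed entourage $U$; concretely, in a saturated ordered field the standard integers form such a set). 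So the dichotomy you rely on is false as stated, and it is exactly the difficulty the lemma is about: the putative discreteness must be made definable before (\textbf{Inf}) or the inductive hypothesis can be invoked. A secondary problem is your use of sequences: in an $|L|^+$-saturated model the topology is typically not first countable, so there need not be \emph{any} sequence from $\pi(D)$ converging to $\bar b$, nor can you extract convergent subsequences of $(t_i)$; rephrasing via ``every neighborhood contains infinitely many points'' does not repair the argument, because the promotion step still runs through the non-definable set of chosen last coordinates.

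For comparison, the paper argues by contradiction and first uses saturation to produce a \emph{single} entourage $U$ with $U[x]\cap D=\{x\}$ for infinitely many $x\in D$, so that after shrinking, $D$ is uniformly $U$-separated. If $\pi_1(D)$ is finite one is in (the analogue of) your infinite-fiber case; otherwise induction gives an accumulation point $w$ of $\pi_1(D)$, and restricting $D$ to the definable slab $U'[w]\times M$ with $U'\circ U'\subseteq U$ forces the first coordinates of points of $D'$ to be pairwise $U$-close, pushing the uniform separation onto the last coordinate: $\pi_2(D')$ is definable, $U$-separated, hence discrete and finite, and an infinite fiber over some last coordinate then projects to an infinite definable $U$-separated subset of $M^{k-1}$, contradicting induction. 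Every set appearing there is definable, which is what your argument lacks. If you want to keep a direct approach, you would have to replace $\{t_i\}$ by definable sets such as $\pi_2\bigl(D\cap(U[\bar b]\times M)\bigr)$ for entourages $U$; the case where some such set is finite can be handled by pigeonhole and induction, but the case where they are all infinite is not routine, and the uniform-separation trick is the standard way around it.
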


\begin{proof}
We apply induction to $k$.
The base case follows from (\textbf{Inf}).
We fix $k \geqslant 2$ and suppose towards a contradiction that $D \subseteq M^k$ is definable, infinite and discrete.
For all $x \in D$ there is a $U \in \cB$ such that $U[x] \cap D = \{ x\}$.
Applying saturation fix a $U \in \cB$ such that $U[x] \cap D = \{ x \}$ holds for infinitely many $x \in D$.
After replacing $D$ with the set of such $x$ if necessary we suppose that if $x,y \in D$ and $x \neq y$ then $(x,y) \notin U$.
Let $\pi_1: M^k \to M^{ k - 1 }$ be the projection onto the first $k - 1$ coordinates and let $\pi_2: M^k \to M$ be the projection onto the last coordinate.
We first suppose that $\pi_1(D)$ is finite.
This implies that there is a $d \in \pi_1(D)$ such that $\pi_1^{-1}(d) \cap D$ is infinite.
Then $\pi_2[\pi_1^{-1}(d) \cap D]$ is infinite and discrete.
This contradicts the base case so we may assume that $\pi_1(D)$ is infinite.
Applying the inductive assumption we fix an accumulation point $w$ of $\pi_1(D)$.
Let $U' \in \mathcal{B}$ be such that $U' \circ U' \subseteq U$.
We declare $W = U'[w] \times M$ and $D' = D \cap W$.
Note that $D'$ is infinite.
If $x, y \in D'$ then $(\pi_1(x), w) \in U'$ and $(\pi_1(y), w) \in U'$ so $(\pi_1(x), \pi_1(y)) \in U$.
If $x,y \in D'$ and $(\pi_2(x), \pi_2(y)) \in U$ then as $(\pi_1(x), \pi_1(y)) \in U$ we would also have $(x,y) \in U$, which implies $x = y$.
Thus if $x,y \in D'$ and $x \neq y$ then $(\pi_2(x), \pi_2(y)) \notin U$.
This implies that $\pi_2(D')$ is discrete and therefore finite.
As $D'$ is infinite there is a $d \in \pi_2(D')$ such that $\pi_2^{-1}(d) \cap D'$ is infinite.
Then $\pi_1[\pi_2^{-1}(d) \cap D']$ is infinite and discrete.
This contradicts the inductive assumption.
\end{proof}

\begin{Prop}\label{proposition:continuity}
Let $V \subseteq M^k$ be a definable open set.
Every correspondence $V \rightrightarrows M^l$ is continuous on an open dense subset of $V$, and thus is continuous almost everywhere on $V$.
\end{Prop}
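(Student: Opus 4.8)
The plan is to reduce to the case of a function by Lemma~\ref{lem:split1}, then handle functions by a direct argument. First I would show the statement for a definable function $f : V \to M^l$. By working coordinatewise in the target (continuity of $f$ at a point is equivalent to continuity of each component) and by the local nature of continuity, it suffices to treat a definable function $f : V \to M$ where $V$ is open. It also suffices to prove that the set of continuity points of $f$ is dense in $V$, since that set is definable (being a continuity point is a first-order condition in this setting, as the uniformity is definable), and a dense definable set contains a dense open set and is almost all of $V$ by Lemma~\ref{lem:dense}.

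So fix a nonempty definable open $V' \subseteq V$; I want to find a point of $V'$ at which $f$ is continuous. The key idea is to look at the graph $\Gamma = \gr(f) \subseteq V' \times M$ and argue using Lemma~\ref{lem:nodiscrete} that $\Gamma$ cannot be discrete, in fact has a point where the projection to $V'$ is a local homeomorphism onto an open set, which forces continuity there. More precisely: consider, for each entourage $W \in \cB$, the set
$$ G_W := \{ p \in V' : (\forall p' \in W[p] \cap V')\ (f(p), f(p')) \in W \}. $$
Wait --- this needs care because $W$ appears both as the domain-closeness and the target-closeness. A cleaner approach: for $V, W \in \cB$ let $S_{V,W}$ be the set of $p \in V'$ such that $f(W[p]\cap V') \subseteq V[f(p)]$. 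For fixed $V$, the family $\{ S_{V,W} : W \in \cB \}$ is directed (smaller $W$ gives larger $S_{V,W}$), and $f$ is continuous at $p$ iff for every $V$ there is a $W$ with $p \in S_{V,W}$. So it suffices to show that for each $V \in \cB$, the union $\bigcup_{W} S_{V,W}$ contains an open dense subset of $V'$: then intersecting over a cofinal countable set of $V$'s (using saturation / the fact that $\cB$ has a countable cofinal subfamily up to the relevant formulas, or a Baire-type argument on definable sets via Lemma~\ref{lem:dense}) gives density of the continuity set. For a fixed $V$, by Lemma~\ref{lem:poset} applied to the directed family $\{S_{V,W}\}$, it is enough to see that $\bigcup_W S_{V,W}$ is dense in $V'$; equivalently, that on no nonempty open subset does $f$ fail to be ``$V$-continuous'' everywhere. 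This is where one uses that an everywhere-discontinuous-scale function would produce an infinite definable discrete set, contradicting Lemma~\ref{lem:nodiscrete}: if no point of some open $W' \subseteq V'$ lies in any $S_{V,W}$, one extracts, via saturation and the pigeonhole over a mutually indiscernible grid as in the proof of Lemma~\ref{lem:dense}, a configuration contradicting (\textbf{Inf}) or producing a discrete infinite set in the graph.

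For the general case of a correspondence $f : V \tto M^l$: by Lemma~\ref{lem:split1}, almost every $p \in V$ has a neighborhood on which $\gr(f)$ splits as a disjoint union of graphs of definable functions $g_1, \ldots, g_m : V_p \to M^l$. Apply the function case to each $g_i$ to get that each $g_i$ is continuous on an open dense subset of $V_p$; the finite intersection of these open dense sets is open dense in $V_p$, and on it $f$ is continuous (since on that common neighborhood $f(q) = \{g_1(q), \ldots, g_m(q)\}$ with all $g_i$ continuous, and the graphs stay disjoint so the notion of $(f(q), f(q'))\in U$ is controlled by the $g_i$ being close). Gluing over the (definably many, by a compactness/definability-of-families argument) pieces $V_p$ yields an open dense subset of all of $V$ on which $f$ is continuous. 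The last sentence of the proposition, that open dense implies almost everywhere, is immediate from Lemma~\ref{lem:dense}.

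The main obstacle I expect is the core density claim for functions --- proving that on every nonempty open set there is a point at which $f$ is, for a fixed scale $V$, continuous. The naive hope ``graph is not discrete, hence has a point where projection is nice'' is not literally enough, because a non-discrete graph can still project badly (think of a function continuous nowhere but with dense graph). The right argument must extract a genuine $dp$-rank $k$ configuration adapted to the graph: one wants a grid $I_1 \times \cdots \times I_k$ in the domain together with a uniform entourage $W$ so that $f$ sends each $W[\bar a]$ into a single small ball, and this is precisely the kind of saturation-plus-directedness manoeuvre used in Lemmas~\ref{lem:dense} and \ref{lem:poset}; making that extraction respect the failure of continuity everywhere, so as to derive a contradiction with (\textbf{Inf}) or with Lemma~\ref{lem:nodiscrete}, is the delicate point and will likely require an auxiliary lemma isolating when a directed definable family of ``goodness sets'' has dense union.
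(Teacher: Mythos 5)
Your reduction of the correspondence case to the function case is circular: Lemma~\ref{lem:split1} is stated (and proved, via Lemma~\ref{lem:split}) only for correspondences that are \emph{already continuous}, and in the paper the statement you want --- that an arbitrary correspondence splits locally into continuous functions almost everywhere --- is Corollary~\ref{corollary:rationalclos}, which is deduced \emph{from} Proposition~\ref{proposition:continuity}. Without continuity there is in general no definable way to select individual values of a correspondence (this is exactly the $\acl$ versus $\dcl$ issue the paper flags), so you cannot split first and prove continuity afterwards. The paper instead handles correspondences directly, e.g.\ in the one-variable case by passing to the set $D \subseteq V \times M^i$ of pairs $(p,\bar y)$ with $f(p)=\{y_1,\ldots,y_i\}$, i.e.\ by listing the values as a tuple rather than splitting them into functions.

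There are two further gaps in the function-case argument itself. First, your plan proves, for each fixed target entourage $V$, that the set $\bigcup_W S_{V,W}$ is dense, and then wants to intersect over all $V$; but $\cB$ need not have a countable cofinal subfamily in an $|L|^+$-saturated model, and even if it did there is no Baire-type theorem available here, so the intersection step is unjustified. The paper avoids this by arguing the other way around: assuming the continuity points are not dense, it finds an open set on which $f$ is discontinuous everywhere, and then uses directedness of the family of discontinuity-witness sets $\{B_W : W \in \cB\}$ together with Lemma~\ref{lem:poset} to fix a \emph{single} entourage $W$ witnessing discontinuity on a whole open set; the contradiction is then derived at that one scale, with no intersection over entourages. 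Second, your core "graph is not discrete" argument can only work for $k=1$: the crucial step is that an infinite definable projection to the domain has nonempty interior, which uses (\textbf{Inf}) and hence is a one-variable statement; for $k \geq 2$ an infinite definable subset of $M^k$ may have empty interior. This is why the paper's proof is an induction on $k$: the base case is the argument sketched above, and the inductive step applies the one-variable case fiberwise to $f_{\bar y}$, uses subadditivity of dimension to shrink to a box on which all $f_{\bar y}$ are continuous, applies Lemma~\ref{lem:poset} again to get uniform one-variable continuity at a single scale, and then invokes the inductive hypothesis for $f^t$ on $M^{k-1}$. Your sketch does not address this multi-variable step at all, and it is not a routine addition.
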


\begin{proof}
As $M^l$ is equipped with the product topology it suffices to show that every correspondence $f: V \tto M$ is continuous on an open dense set.
By Lemma \ref{lem:dense} it suffices to show that the set of points of continuity of $f: V \tto M$ is dense.
It is therefore enough to fix an open $V' \subseteq V$ and show that $f$ is continuous on some point in $V'$.
To simplify notation we assume $V' = V$, this does not result in any loss of generality.

We first treat the case $k = 1$.
We suppose towards a contradiction that $f$ is discontinuous at every point in $V$.
Let $n$ be such that $|f(p)| \leq n$ for all $p \in V$.
For every $i \leq n$ we let $A_i$ be the set of $p \in V$ such that $|f(p)| = i$.
Applying Corollary~\ref{corollary:partition} fix $i \leq n$ such that $A_i$ has non-empty interior in $V$.
After replacing $V$ with a smaller definable open set if necessary we suppose that $V \subseteq A_i$.
Let $B \subseteq \cB \times V$ be the set of $(W,p)$ such that for all $W' \in \cB$ there is a $q \in W'[p]$ such that $( f(p), f(q) ) \notin W$.
For every $p \in V$ there is a $W \in \cB$ such that $(W,p) \in B$.
As the family $\{ B_W : W \in \cB\}$ is directed we apply Lemma~\ref{lem:poset} and fix a $W \in \cB$ such that $B_W$ has non-empty interior in $V$.
After replacing $V$ with a smaller definable open set if necessary we suppose that $V \subseteq B_W$.
For every $p \in V$ there are $q \in V$ arbitrarily close to $p$ such that $( f(p), f(q) ) \notin W$.
Fix $U \in \cB$ such that $U \circ U \subseteq W$.
Let $D \subseteq V \times M^i$ be the set of $(p,\bar y)$ such that $\bar y = (y_1,\ldots, y_i)$ and $f(p) = \{ y_1, \ldots, y_i \}$.
Let $\pi: D \to V$ be the coordinate projection.
As $D$ is infinite an application of Lemma~\ref{lem:nodiscrete} gives an accumulation point $(p, \bar y) \in D$.
Thus $U[(p, \bar y)] \cap D$ is infinite, so $\pi( U[(p, \bar y)] \cap D)$ is also infinite and thus has non-empty interior in $V$.
Let $V'$ be a definable open subset of $\pi(U[(p,\bar y)] \cap D)$.
Note that if $x \in V'$ then $(f(x), f(p)) \in U$.
Fix $q \in V'$.
For all $r \in V'$ we have $(f(q), f(p)) \in U$ and $(f(r), f(p)) \in U$ so therefore $(f(q), f(r)) \in W$.
This is a contradiction as there are $r$ arbitrarily close to $q$ satisfying $(f(q),f(r)) \notin W$.
Thus $f$ must be a continuous at some point in $V$.

We now apply induction to $k \geqslant 2$.
We again suppose towards a contradiction that $f$ is discontinuous at every point in $V$.
For every $p \in V$ there is a $W \in \cB$ such that there exist $q \in V$ arbitrarily close to $p$ satisfying $( f(p), f(q) ) \notin W$.
Arguing as in the case $k = 1$ we may suppose that $W \in \cB$ is such that for all $p \in V$ there are $q \in V$ arbitrarily close to $p$ satisfying $( f(p), f(q) ) \notin W$.
After replacing $V$ with a smaller definable open set if necessary we suppose that $V = V_0 \times V_1$ for definable open $V_0 \subseteq M$ and $V_1 \subseteq M^{k - 1}$.
Given $\bar y \in V_1$ we let $f_{ \bar y}: V_0 \tto M$ be the correspondence given by $f_{ \bar y}(t) = f(t, \bar y)$. 
Then for all $\bar y \in V_1$ the correspondence $f_{\bar y}$ is continuous away from finitely many points of $V_0$.
It follows by subadditivity that the set of $(t, \bar y) \in V_0 \times V_1$ such that $f_{\bar y}$ is discontinuous at $t$ has dimension at most $k - 1$ and is therefore nowhere dense.
After replacing $V_0$ and $V_1$ with smaller definable open sets if necessary we suppose that $f_{\bar y} : V_0 \tto M$ is continuous for all $\bar y \in V_1$.
Let $U \in \cB$ be such that $U \circ U \subseteq W$.
For $O \in \cB$ let $B_O \subseteq V$ be the set of $(t, \bar y)$ such that if $t' \in O[t]$, then $(f_{\bar y}(t), f_{\bar y}(t') ) \in U$.
For every $(t,\bar y) \in V$ there is an $O \in \cB$ such that $(t,\bar y) \in B_O$.
The family $\{ B_O : O \in \cB\}$ is directed so applying Lemma~\ref{lem:poset} we fix an $O \in \cB$ such that $B_O$ has non-empty interior in $V_0 \times V_1$.
After replacing $V_0$ and $V_1$ with smaller open sets if necessary we suppose that $V_0 \times V_1 \subseteq B_O$ and $V_0 \times V_0 \subseteq O$.
Thus if $\bar y \in V_1$ and $t,t' \in V_0$ then $( f(t, \bar y), f(t', \bar y) ) \in U$.
Fix $t \in V_0$ and let $f^t : V_1 \tto M$ be given by $f^t(\bar y) = f(t, \bar y)$.
Applying the inductive hypothesis we fix a $\bar z \in V_1$ at which $f^t$ is continuous.
After replacing $V_1$ with a smaller open set if necessary we may suppose that $( f^t(\bar y ) , f^t(\bar z) ) \in U$ holds for all $\bar y \in V_1$.
Suppose that $(s, \bar y) \in V_0 \times V_1$.
Then
$$ ( f(t, \bar y), f(t, \bar z) ) \in U \quad \text{and} \quad ( f(t, \bar y), f(s, \bar y) ) \in U. $$
As $U \circ U \subseteq W$ we conclude that:
$$ ( f(t, \bar z), f(s, \bar y) ) \in W \quad \text{for all } (s, \bar y) \in V_0 \times V_1. $$
This gives a contradiction as there are $(s, \bar y)$ arbitrarily close to $(t, \bar z)$ such that $( f(t, \bar z), f(s, \bar y) ) \notin W$.
\end{proof}
Definable closure will not in general agree with algebraic closure, so it should not in general be the case that the graph of a continuous correspondence is a finite union of graphs of definable functions.
Corollary~\ref{corollary:rationalclos} allows us to make up for this in some circumstances.
Corollary~\ref{corollary:rationalclos} is a direct consequence of Lemma~\ref{lem:split1} and Proposition~\ref{proposition:continuity}.

\begin{Cor}\label{corollary:rationalclos}
Let $U \subseteq M^k$ be open and definable and let $f: U \tto M^l$ be a correspondence.
Almost every $p \in U$ has a neighborhood $V$ such that there are continuous definable functions $g_1,\ldots, g_m : V \to M^l$ such that $\gr(g_i) \cap \gr(g_j) = \emptyset$ when $i \neq j$ and
$$ \gr(f|_V) = \gr(g_1) \cup \ldots \cup \gr(g_m). $$
\end{Cor}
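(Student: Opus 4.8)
The plan is to chain together the two main results of this section---generic continuity (Proposition~\ref{proposition:continuity}) and the local splitting of a continuous correspondence into disjoint graphs (Lemma~\ref{lem:split1})---and then check that "almost everywhere" composes correctly. Since the statement is vacuous when $U = \emptyset$, I will assume $U$ is nonempty, so $\dim(U) = k$ because $U$ is open.

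First I would apply Proposition~\ref{proposition:continuity} to the correspondence $f \colon U \tto M^l$: this produces an open dense definable $W \subseteq U$ on which $f$ is continuous. By Lemma~\ref{lem:dense} (or simply because $U \setminus W$ has empty interior and then by Lemma~\ref{lem:fulldim}), $W$ is almost all of $U$, i.e. $\dim(U \setminus W) < k$. Next I would pass to the restriction $f|_W \colon W \tto M^l$, which is now a continuous correspondence on the open set $W$, and apply Lemma~\ref{lem:split1} to it. This yields a definable set $S \subseteq W$ which is almost all of $W$ (so $\dim(W \setminus S) < \dim(W) = k$) such that every $p \in S$ has a neighborhood $V$, open in $W$ hence open in $U$ since $W$ is open, together with definable continuous functions $g_1, \ldots, g_m \colon V \to M^l$ with pairwise disjoint graphs and $\gr(f|_V) = \gr((f|_W)|_V) = \gr(g_1) \cup \cdots \cup \gr(g_m)$.

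Finally I would observe that $U \setminus S \subseteq (U \setminus W) \cup (W \setminus S)$, so by property (2) of $\acl$-dimension (Lemma~\ref{lem:fulldim}, item (2)) we get $\dim(U \setminus S) \leq \max\{\dim(U \setminus W), \dim(W \setminus S)\} < k = \dim(U)$. Hence $S$ is almost all of $U$, and every point of $S$ has a neighborhood with the required splitting, which is exactly the assertion of the corollary. There is essentially no obstacle here; the only point requiring a moment of care is the transitivity of "almost all" used in this last step and the fact that $W$ being open guarantees that neighborhoods taken inside $W$ remain neighborhoods in $U$.
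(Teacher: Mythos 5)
Your argument is correct and is exactly the route the paper takes: the paper states that Corollary~\ref{corollary:rationalclos} is a direct consequence of Proposition~\ref{proposition:continuity} and Lemma~\ref{lem:split1}, and your chaining of the two (passing to the open dense set of continuity and then splitting, with the dimension bookkeeping for ``almost all'') is precisely the intended verification.
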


\section{A Decomposition}
We now show that every definable set is a finite union of graphs of correspondences.
A more complicated argument can be used to show that every definable set is a finite disjoint union of graphs of correspondences.
We do not prove this as the weaker result suffices for our purposes. As before we let $A\subseteq M^k$ be some $C$-definable subset.

\begin{Prop}\label{prp:weakcell}
There are $C$-definable sets $A_1, \ldots, A_n \subseteq A$ which cover $A$ such that each $A_i$ is, up to permutation of coordinates, the graph of a $C$-definable continuous $m$-correspondence $f: U_i \tto M^{k-d}$, where $U_i\subseteq M^{d}$ is a $C$-definable open set and $0 \leq d \leq k$.
\end{Prop}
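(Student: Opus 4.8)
The plan is to prove the statement by induction on $k$, reducing the dimension of the "base" one coordinate at a time. The base case $k=0$ is trivial ($A$ is a point or empty, and $d=0$). For the inductive step, suppose the result holds for all subsets of $M^{k-1}$ (for every choice of parameter set). Given $A \subseteq M^k$, let $d = \dim(A)$. By Proposition~\ref{Prop:coin} there is a coordinate projection $\pi: M^k \to M^d$ with $\pi(A)$ having nonempty interior; after permuting coordinates, assume $\pi$ projects onto the first $d$ coordinates, so $\pi(A) \subseteq M^d$ has interior. Write $\rho: M^k \to M^{k-d}$ for the complementary projection onto the last $k-d$ coordinates.

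First I would handle the fibers: by subadditivity of dimension and Lemma~\ref{lemma:eliminateinfty}, the set $A^{\mathrm{fin}} = \{\bar{x} \in \pi(A) : \pi^{-1}(\bar{x}) \cap A \text{ is finite}\}$ has complement $A^{\mathrm{inf}}$ of $\pi$-fibers that are infinite; since $\dim(A) = d$, subadditivity forces $\dim(\pi(A^{\mathrm{inf}})) < d$, so $\pi(A^{\mathrm{inf}})$ has empty interior in $M^d$. Over the part of $A$ lying above $A^{\mathrm{inf}}$ (which has $\pi$-image of dimension $< d$), the image of $A$ under a suitable coordinate projection again has dimension $\le d$ but now we can feed it to a lower case — more precisely, this piece has dp-rank $\le d$ and, after a coordinate permutation, projects finite-to-one onto something of smaller dimension; one peels it off and handles it by a secondary induction on $d$ (or on $k$, rearranging coordinates). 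The main piece to analyze is $A' = A \cap \pi^{-1}(A^{\mathrm{fin}})$, whose $\pi$-fibers are all finite and nonempty over $A^{\mathrm{fin}}$, and $\pi(A') = A^{\mathrm{fin}}$ still has nonempty interior. Restricting to a definable open $U \subseteq A^{\mathrm{fin}}$ and viewing $\rho|_{A' \cap \pi^{-1}(U)}$ as a function of $\pi$, the set $A' \cap \pi^{-1}(U)$ is exactly the graph of a correspondence $g: U \rightrightarrows M^{k-d}$. By Corollary~\ref{corollary:partition} we may shrink $U$ so that $|g(\bar{x})|$ is constant, i.e. $g$ is an $m$-correspondence, and by Proposition~\ref{proposition:continuity} we may shrink $U$ further so that $g$ is continuous. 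That produces one of the desired pieces $A_i$: up to the coordinate permutation, the graph of a definable continuous $m$-correspondence over a definable open $U \subseteq M^d$.

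What remains is the "boundary" $A \setminus (\text{graph of } g \text{ over } U)$, which consists of $A'$ lying above $A^{\mathrm{fin}} \setminus U$ together with $A^{\mathrm{inf}}$-part; both have $\pi$-image of dimension $< d$. Here I would apply the inductive hypothesis on $k$: a set all of whose points project (under some coordinate projection) with finite fibers onto a set of dimension $< d \le k$... actually the cleanest bookkeeping is to note that this leftover set $R$ has $\dim(R) \le d$ but is not "full-dimensional over $\pi$" — so iterate: repeat the whole construction on $R$, peeling off another graph-of-correspondence piece. Since at each stage the $\pi(\cdot)$-image of the leftover drops in dimension (it is contained in a set with empty interior in $M^d$, hence of dimension $< d$), and dimension is a non-negative integer bounded by $k$, this process terminates after finitely many steps. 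Collecting all the pieces produced gives the finite cover $A_1, \dots, A_n$, each a graph of a continuous $m$-correspondence over a definable open subset of some $M^{d'}$, $0 \le d' \le k$, up to permutation of coordinates. All sets produced are $C$-definable since every operation used (projections, the sets $A^{\mathrm{fin}}$, $A^{\mathrm{inf}}$, the open sets extracted from dimension statements via definability of dimension in families, the constancy locus of $|g|$, the continuity locus) is $C$-definable when $A$ is.

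The main obstacle I anticipate is the bookkeeping in the iteration: one must make sure that after peeling off the top graph piece, the remaining set genuinely has a strictly smaller invariant so the recursion is well-founded, and that the coordinate permutations chosen at different stages can be reconciled (they need not be — each $A_i$ is allowed its own permutation, which is what the statement permits). A secondary subtlety is that $\pi(A')$ need not be open — only have nonempty interior — so one always works over a chosen definable open $U$ inside it and must track that the part of $A'$ over $\pi(A') \setminus U$ gets correctly pushed into the leftover set $R$; combined with Lemma~\ref{lem:dense} and Corollary~\ref{corollary:partition}, this is routine but needs care. Finally, extracting a continuous $m$-correspondence requires first shrinking to where $|g|$ is constant (Corollary~\ref{corollary:partition}) and then shrinking to where it is continuous (Proposition~\ref{proposition:continuity}), in that order.
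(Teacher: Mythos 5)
There is a genuine gap, and it sits exactly where you flagged your own worry. Your peeling argument needs the leftover set to have a strictly smaller invariant, and you get this from the claim that ``since $\dim(A)=d$, subadditivity forces $\dim(\pi(A^{\mathrm{inf}}))<d$''. Subadditivity only gives an upper bound $\dim(D)\leq\dim(\pi(D))+\max_b\dim(D_b)$; what you are actually invoking is the lower bound ``if all fibers are infinite then $\dim(D)\geq\dim(\pi(D))+1$'', which is a form of additivity equivalent to exchange-type behaviour of $\acl$ and is \emph{false} in this setting. The paper's final proposition is precisely about this: when $\acl$ fails exchange there is a locally constant correspondence $f:U\tto M$ with infinite image, and its graph $A=\gr(f)\subseteq M^2$ is a counterexample to your inequality. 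Here $\dim(A)=1$, and if the chosen projection $\pi$ happens to be the one onto the image coordinate (its image is infinite, hence has interior, so nothing in your setup rules it out), then every fiber is infinite, $A^{\mathrm{fin}}$ may be empty, and $\pi(A^{\mathrm{inf}})$ still has dimension $d=1$. So no main piece is produced, the leftover is all of $A$, its $\pi$-image has not dropped in dimension, and the recursion makes no progress. The accompanying remark that the infinite-fiber part ``projects finite-to-one onto something of smaller dimension after a permutation'' is likewise unjustified (in the example it projects finite-to-one onto a full-dimensional open set), and the secondary induction ``on $d$ or on $k$'' is never actually given; note also that your induction on $k$ is announced but the inductive hypothesis for $M^{k-1}$ is never used.

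The paper avoids the need for any such dimension lower bound or termination argument by working pointwise and using compactness: for each $\bar a\in A$ it sets $d=\dim(\bar a/C)$ (the $\acl$-dimension of the tuple, not of the set), so that after a permutation $(a_{d+1},\ldots,a_k)\in\acl(Ca_1,\ldots,a_d)$, which directly yields a $C$-definable correspondence $f:B\tto M^{k-d}$ with $\bar a\in\gr(f)\subseteq A$; then the genericity of $(a_1,\ldots,a_d)$ (it has dimension $d$ over $C$, so it cannot lie in any $C$-definable set of dimension $<d$) guarantees it survives the successive shrinkings to $\interior{B}$, to the interior of the constant-fiber-size locus (Corollary~\ref{corollary:partition}), and to the continuity locus (Proposition~\ref{proposition:continuity}); saturation then converts ``every point lies in such a $C$-definable graph'' into a finite cover. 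If you want to salvage a global peeling argument you would have to show that the projection can always be chosen so that the full-dimensional part of $A$ sits finite-to-one over it, and that is essentially the pointwise $\acl$-dimension statement the paper uses; as written, your proof does not establish it.
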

If $d = 0$ then we identify the graph of $f: M^0 \rightarrow M^k$ with a finite subset of $M^k$.
If $d = k$ then we identify the graph of $f: U \tto M^0$ with $U$.
In this way we regard any open definable subset of $M^k$ and any finite subset of $M^k$ as the graph of a correspondence.

\begin{proof}
By saturation it suffices to prove the following: for any $\bar{a} \in A$ there is a $C$-definable set $A_0$ which is, up to a permutation of coordinates, the graph of a $C$-definable continuous $m$-correspondence $U \tto M^{k - d}$ for some $C$-definable open $U \subseteq M^d$, and satisfies $\bar{a} \in A_0 \subseteq A$.
Fix $\bar a = (a_1,\ldots,a_k) \in A$. 
Let $d=\dim(\bar a|C)$. 
By definition of dimension, up to a permutation of variables, we have $(a_{d+1},\ldots,a_k)\in \acl(Ca_1,\ldots,a_{d})$.
It follows that there is a $C$-definable set $B \subseteq M^d$ and a $C$-definable correspondence $f: B \tto M^{k - d}$ such that $\bar{a} \in \gr(f)$.
After intersecting $\gr(f)$ with $A$ and replacing $B$ with a smaller $C$-definable set if necessary we may assume that $\gr(f) \subseteq A$.
Lemma~\ref{lem:dprank} shows that $\dim(B \setminus \interior{B}) < d$ so as $(a_1,\ldots,a_d) \in B$ and $\dim(a_1,\ldots,a_d|C) = d$ we have $(a_1,\ldots,a_d) \in \interior{B}$.
Let $N$ be such that $|f(x)| \leq N$ for all $x \in B$.
For each $1 \leq i \leq N$ let $E_i \subseteq \interior{B}$ be the set of $x$ such that $|f(x)| = i$.
Corollary~\ref{corollary:partition} shows that
$$ \dim[\interior{B} \setminus (\interior{E_1} \cup \ldots \cup \interior{E_N})] < d $$
so $(a_1,\ldots,a_d) \in \interior{E_m}$ for some $1 \leqslant m \leqslant N$.
Fix such an $m$.
Let $U \subseteq \interior{E_m}$ be the set of points that have a neighborhood on which $f$ is continuous.
Proposition~\ref{proposition:continuity} shows that $U$ is almost all of $\interior{E_m}$ so $(a_1,\ldots,a_d) \in U$.
The restriction of $f$ to $U$ is a continuous $m$-correspondence.
We take $A_0 = \gr(f|_{U})$.
\end{proof}
\noindent From Proposition~\ref{prp:weakcell} and Lemma~\ref{lem:locclos} we immediately have:
\begin{Cor}\label{cor:constructible}
Every definable subset of $M^k$ is a finite union of locally closed sets.
Every definable subset of $M^k$ is a boolean combination of definable open sets.
\end{Cor}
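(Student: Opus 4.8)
The plan is to deduce both statements directly from Proposition~\ref{prp:weakcell}, Lemma~\ref{lem:locclos} and Lemma~\ref{lemma:locallyclosed}. Let $A \subseteq M^k$ be definable. By Proposition~\ref{prp:weakcell} we may write $A = A_1 \cup \cdots \cup A_n$ where, for each $i$, after applying a suitable permutation $\sigma_i$ of the coordinates of $M^k$, the set $\sigma_i(A_i)$ is the graph of a definable continuous $m_i$-correspondence $f_i \colon U_i \tto M^{k - d_i}$ with $U_i \subseteq M^{d_i}$ a definable open set and $0 \le d_i \le k$. Since $U_i$ is open, Lemma~\ref{lem:locclos} tells us that $\gr(f_i)$ is a locally closed subset of $M^{d_i} \times M^{k - d_i}$, which we identify with $M^k$.

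First I would check that the two boundary conventions are consistent with this: when $d_i = k$ the graph is just the open set $U_i \subseteq M^k$, which is trivially locally closed, and when $d_i = 0$ the graph is a finite subset of $M^k$, which is closed (the topology is Hausdorff) and hence locally closed. Next, a coordinate permutation is a definable homeomorphism of $M^k$, so it carries definable locally closed sets to definable locally closed sets; therefore each $A_i = \sigma_i^{-1}(\gr(f_i))$ is locally closed, and $A = A_1 \cup \cdots \cup A_n$ is a finite union of definable locally closed sets. This is the first assertion.

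For the second assertion I would invoke Lemma~\ref{lemma:locallyclosed}: each $A_i$, being definable and locally closed, is the intersection $F_i \cap O_i$ of a definable closed set $F_i$ and a definable open set $O_i$. Since $F_i$ is the complement of the definable open set $M^k \setminus F_i$, each $A_i$ is a boolean combination of definable open sets, and hence so is the finite union $A = \bigcup_{i} A_i$. There is no real obstacle here; the only points requiring (routine) attention are the two degenerate values $d_i = 0$ and $d_i = k$, and the observation that permuting coordinates preserves both definability and the relevant topological notions — which is why the author can say this follows immediately.
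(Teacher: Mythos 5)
Your proposal is correct and follows exactly the route the paper has in mind: Proposition~\ref{prp:weakcell} gives the decomposition into (permuted) graphs of continuous correspondences on open sets, Lemma~\ref{lem:locclos} makes each piece locally closed, and Lemma~\ref{lemma:locallyclosed} turns each locally closed piece into a definable closed set intersected with a definable open set, hence a boolean combination of definable open sets. The paper states the corollary as immediate from these same lemmas, so your write-up (including the routine checks for $d_i = 0$, $d_i = k$ and coordinate permutations) is just a spelled-out version of the paper's argument.
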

\noindent We now show that dimension of the frontier of $A$ is strictly less then the dimension of $A$.
\begin{Prop}\label{frontier}
$\dim \partial(A) < \dim A $.
\end{Prop}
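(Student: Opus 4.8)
The plan is to reduce to the case where $A$ is the graph of a continuous $m$-correspondence over an open set, using the decomposition of Proposition~\ref{prp:weakcell}, and then handle that case by a fiberwise argument combined with subadditivity of dimension. First I would observe that we may assume $A$ is closed. Indeed, $\partial(A) = \cl(A)\setminus A$, and $\dim\cl(A) = \dim A$ will follow once we know the frontier estimate for the pieces (or: one runs the whole argument keeping track of $\cl(A)$). More precisely, write $A = A_1\cup\cdots\cup A_n$ as in Proposition~\ref{prp:weakcell}; since $\partial(A)\subseteq \partial(A_1)\cup\cdots\cup\partial(A_n)$ and $\dim$ is the max over a finite union (property (2) of Lemma~\ref{lem:fulldim}), it suffices to bound $\dim\partial(A_i)$ for each $i$. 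Fix one such piece; after permuting coordinates it is $\gr(f)$ for a continuous $m$-correspondence $f: U\tto M^{k-d}$ with $U\subseteq M^d$ open, and $\dim\gr(f) = d$ by Lemma~\ref{lem:dprank} (or directly: the projection to $U$ is finite-to-one, so subadditivity gives $\dim\gr(f)\le d$, and $U$ open forces equality). By Lemma~\ref{lem:locclos}, $\gr(f)$ is locally closed in $M^k$, so $\partial(\gr(f)) = \cl(\gr(f))\setminus \gr(f)$ is closed.

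The key step is to show $\dim\partial(\gr(f)) < d$ where $d = \dim\gr(f)$. Let $\pi: M^k\to M^d$ be the coordinate projection onto the $U$-coordinates. Any point $(x,y)\in\cl(\gr(f))$ has $x\in\cl(U)$. Split $\partial(\gr(f))$ into the part $P_1$ lying over $\cl(U)\setminus U = \partial(U)$ and the part $P_2$ lying over $U$. For $P_1$: since $U\subseteq M^d$ is open, $\partial(U)$ has empty interior in $M^d$, so $\dim\partial(U)\le d-1$ by Lemma~\ref{lem:fulldim} (an open-dense complement argument, or the remark after Corollary~\ref{corollary:partition}); and $\pi$ restricted to $\cl(\gr(f))$ is still finite-to-one, because saturation bounds $|f(x)|\le m$ for $x\in U$ and a closure point over a fixed $x_0$ is a limit of points $(x,y)$ with $y\in f(x)$, so there are at most $m$ of them (this uses that $M^{k-d}$ is Hausdorff and a compactness/saturation count on the definable set $\cl(\gr(f))$, via Lemma~\ref{lemma:eliminateinfty}). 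Hence $\dim P_1\le \dim\partial(U) + 0 \le d-1$ by subadditivity. For $P_2$: a point $(x_0,y_0)\in P_2$ has $x_0\in U$ but $(x_0,y_0)\notin\gr(f)$, i.e. $y_0\notin f(x_0)$; yet $(x_0,y_0)$ is a limit of graph points. By continuity of $f$ at $x_0$, for any entourage $V$ all points $(x,y)\in\gr(f)$ with $x$ close to $x_0$ have $y$ in the $V$-neighbourhood of the finite set $f(x_0)$; since $f(x_0)$ is finite and the topology is Hausdorff, the only possible limits over $x_0$ are the points of $f(x_0)$ itself. This contradicts $y_0\notin f(x_0)$, so in fact $P_2 = \emptyset$. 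Therefore $\partial(\gr(f)) = P_1$ has dimension $\le d-1 < d = \dim\gr(f)$.

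Assembling: $\dim\partial(A)\le \max_i \dim\partial(A_i) \le \max_i (\dim A_i - 1) \le \dim A - 1 < \dim A$, provided each $A_i$ with $\dim A_i = \dim A$ contributes a frontier of strictly smaller dimension — which is exactly what the previous paragraph gives. One must be slightly careful that the inequality $\dim\partial(A_i)<\dim A_i$ is what we proved for each $i$ individually; since $\dim A_i\le\dim A$ always, $\dim\partial(A_i)\le \dim A_i - 1\le \dim A - 1$, and taking the max over $i$ finishes it.

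**Anticipated main obstacle.** The delicate point is the claim that $\pi|_{\cl(\gr(f))}$ remains finite-to-one — equivalently, that the correspondence $f$ does not "blow up" to an infinite fiber at frontier points over $U$ itself, and has uniformly bounded fiber size over $\partial(U)$. The cleanest route is: the set $Z = \{x\in M^d : \pi^{-1}(x)\cap\cl(\gr(f))\text{ is infinite}\}$ is definable, and if it were nonempty one would derive a violation of $\dim\cl(\gr(f))\le d$ via subadditivity — but that is circular, since bounding $\dim\cl(\gr(f))$ is essentially our goal. Instead I expect the honest argument to run through continuity directly on the $U$-part (showing $P_2=\emptyset$, as above, so no new fibers appear over $U$) and through Lemma~\ref{lemma:eliminateinfty} plus a saturation argument on $\partial(U)$ to get a uniform finite bound there, using that $\partial(U)$ is already known to have dimension $<d$. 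Getting this bookkeeping exactly right, rather than any single hard idea, is the crux.
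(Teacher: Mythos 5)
Your reduction to pieces (frontier of a union is contained in the union of the frontiers, then Proposition~\ref{prp:weakcell}) and your observation that $P_2=\emptyset$ (the graph of a continuous correspondence is closed over its open domain, i.e.\ Lemma~\ref{lem:locclos}) are both fine. But the step you yourself flagged as the crux is a genuine gap, and it cannot be repaired along the lines you suggest: it is simply false that $\pi$ restricted to $\cl(\gr(f))$ is finite-to-one (or even has fibers of bounded size) over $\partial(U)$. Take $M$ a real closed field, $U=(0,1)^2$ and $f(x,y)=x/y$, a continuous definable function on $U$; the fiber of $\cl(\gr(f))$ over the frontier point $(0,0)$ of $U$ is an entire ray $\{0\}\times\{0\}\times[0,\infty)$, hence infinite and in fact one-dimensional. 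So no saturation argument via Lemma~\ref{lemma:eliminateinfty} can produce the uniform bound you hope for, and the subadditivity estimate degrades to $\dim P_1\le \dim\partial(U)+\max_b\dim\bigl(\cl(\gr(f))_b\bigr)$, which in this example is $1+1=d$, not $d-1$. (Note the conclusion of the proposition still holds in the example --- the frontier is one-dimensional --- but your route does not prove it.)

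The paper avoids this fiberwise bookkeeping entirely. It reduces to $A$ locally closed (Corollary~\ref{cor:constructible}) and then argues by dp-rank: assuming $\dim\partial(A)=l$, it takes an $l$-fold product of sequences inside a projection of $\partial(A)$ (Lemma~\ref{lem:dprank}), and uses local closedness of $A$ to build, by saturation, a nested sequence of entourages $W_0\supseteq W_1\supseteq\cdots$ together with points $x^n_{\bar r}\in A$ such that $W_n[x^n_{\bar r}]$ meets $\partial(A)$ over the prescribed fiber while $W_{n+1}[x^n_{\bar r}]$ misses $\partial(A)$; the resulting formulas give an ict-pattern of depth $l+1$ based on $A$, so $\dim(A)\ge l+1$. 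This extra combinatorial idea (trading the impossible finite-fiber claim for an ict-pattern built from the entourage scale) is exactly what your proposal is missing.
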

\begin{proof}
If $A = A_1 \cup \ldots \cup A_n$ then
$ \partial (A) \subseteq \partial(A_1) \cup \ldots \cup \partial(A_n). $
Therefore if $A_1,\ldots A_n \subseteq A$ are definable sets which cover $A_i$ and $\dim(A_i) < \dim \partial(A_i)$ holds for every $i$ then $\dim(A) < \dim \partial(A)$.
Applying Corollary~\ref{cor:constructible} we may assume that $A$  is locally closed.
We let $\dim \partial(A) = l$.
Let $\pi: M^k \to M^l$ be the projection onto the first $l$ coordinates.
After permuting coordinates if necessary we assume that $\pi[\partial(A)]$ is $l$-dimensional.
By Lemma~\ref{lem:dprank} there are sequences $J_m = (a^m_i : i < \omega)$ for $1 \leq m \leq l$ such that 
$$ J_1 \times \ldots \times J_l \subseteq \pi[\partial(A)].$$
Given $\bar{r} = (r_1,\ldots,r_l) \in \omega^l$ we let $a_{\bar{r}} = (a^1_{r_1},\ldots,a^l_{r_l})$.
Applying saturation we let $W_0 \in \mathcal B$ be such that
$$ W_0[a^m_i] \cap W_0[a^m_j] = \emptyset \text{ for any } 1 \leqslant m \leqslant l \text{ and distinct } i,j < \omega. $$
For every $\bar{r} \in \omega^l$ we pick an $x^0_{\bar{r}}$ in $A$ such that $W_0[x^0_{\bar{r}}]$ intersects $\partial(A) \cap \pi^{-1}(a_{\bar{r}})$.
As $A$ is locally closed, for each $x^0_{\bar{r}}$ there is a $W \in \mathcal B$ such that $W[x^0_{\bar{r}}]$ is disjoint from $\partial(A)$.
Applying saturation pick an entourage $W_1$ contained in $W_0$ such that $$ W_1[x^0_{\bar{r}}] \cap \partial(A) = \emptyset \text{ for all } \bar{r} \in \omega^l.$$
 Pick points $x^1_{\bar{r}}$ as before with $W_1$ replacing $W_0$ and iterate.
In the end we obtain a nested sequence of entourages $(W_n : n < \omega)$ and points $\{x^n_{\bar{r}} \in A : (\bar{r},n) \in \omega^{l +1}\}$ such that $W_n[x^n_{\bar{r}}]$ intersects $\partial(A) \cap \pi^{-1}(a_{\bar{r}})$ and $W_{n+1}[x^n_{\bar{r}}]$ is disjoint from $\partial(A)$ for all $(n,\bar{r})$.
We let $\psi$ be a formula such that 
$$\mathcal{B} = \{ \psi(M^2, \bar{b}) : \bar{b} \in M^q\}.$$
For each $n$ we let $\bar{b}_n \in M^q$ be such that
$$ \psi(M^2, \bar{b}_n) = W_n. $$
Given variables $\bar{x} = (x_1,\ldots,x_k)$ we define formulas: 
$$\phi_{m}(\bar{x}, a^m_i) := x_m \in W_0[a^m_i] \quad \text{ for } 1 \leqslant m \leqslant l, i < \omega.$$
and 
$$\phi_{l+1}(\bar{x}, \bar{b}_i, \bar{b}_{i + 1}) := [\partial(A) \cap W_i[\bar{x}] \neq \emptyset] \wedge [\partial(A) \cap W_{i+1}[\bar{x}] = \emptyset] \text{ for } i < \omega.$$
This yields an ict-pattern of depth $l + 1$ based on $A$.
Thus $\dim(A) \geq l + 1$.
\end{proof}
\noindent Let $B \subseteq A$.
The relative interior of $B$ in $A$ is the set of $p \in B$ for which there is an open $U \subseteq M^k$ such that $p \in U$ and $U \cap A \subseteq B$.
\begin{Cor}\label{corollary:int1}
Suppose that $B \subseteq A$ is definable and $\dim(B) = \dim(A)$.
Then the relative interior of $B$ in $A$ is almost all of $B$.
\end{Cor}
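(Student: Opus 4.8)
The plan is to recognize the set of points of $B$ at which $B$ fails to be a relative neighbourhood of $A$ as (a subset of) the frontier of $A\setminus B$, and then apply Proposition~\ref{frontier}. First I would unwind the definition: since $\{U[p] : U\in\mathcal B\}$ is a neighbourhood basis at each $p\in M^k$, a point $p\in B$ lies in the relative interior of $B$ in $A$ exactly when there is a $U\in\mathcal B$ with $U[p]\cap A\subseteq B$, equivalently with $U[p]\cap(A\setminus B)=\emptyset$. Hence the complement within $B$ of the relative interior of $B$ in $A$ is precisely $B\cap\cl(A\setminus B)$. As closures of definable sets are definable in the present uniform setting (the condition "$\forall U\in\mathcal B,\ U[x]\cap S\neq\emptyset$" is first-order over the definable index set of $\mathcal B$), this is a definable subset of $B$, so the assertion to be proved is meaningful and says exactly that $\dim\bigl(B\cap\cl(A\setminus B)\bigr)<\dim(B)$.

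The key step is then the containment $B\cap\cl(A\setminus B)\subseteq\partial(A\setminus B)$: indeed $B$ and $A\setminus B$ are disjoint, so any point of $B$ lying in $\cl(A\setminus B)$ lies in $\cl(A\setminus B)\setminus(A\setminus B)=\partial(A\setminus B)$. Applying Proposition~\ref{frontier} to the definable set $A\setminus B$ yields $\dim\partial(A\setminus B)<\dim(A\setminus B)$, and monotonicity of dimension (an immediate consequence of property (2) in the proof of Lemma~\ref{lem:fulldim}) gives $\dim(A\setminus B)\le\dim(A)=\dim(B)$. Chaining these inequalities with the containment above gives $\dim\bigl(B\cap\cl(A\setminus B)\bigr)<\dim(B)$, as required.

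I do not expect a genuine obstacle here: the corollary is essentially a reformulation of Proposition~\ref{frontier}. The only points needing care are bookkeeping around degenerate cases — if $A\setminus B=\emptyset$ then $B=A$ and the relative interior of $B$ in $A$ is all of $B$, while if $\dim(B)=0$ then $\dim(A)=0$, so $A$ is finite, hence discrete, and again every point of $B$ is isolated in $A$ and so lies in the relative interior — and the (entirely routine) check that $\cl(A\setminus B)$, and therefore the relative interior, is definable.
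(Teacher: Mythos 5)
Your proof is correct and follows the same route as the paper: identify $B$ minus its relative interior as a subset of $\partial(A\setminus B)$, apply Proposition~\ref{frontier} to $A\setminus B$, and chain with monotonicity of dimension and $\dim(A)=\dim(B)$. The extra bookkeeping you add (definability of the closure, the degenerate cases) is harmless but not needed beyond what the paper's one-line chain of inequalities already gives.
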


\begin{proof}
Let $I$ be the relative interior of $B$ in $A$.
Then $B \setminus I \subseteq \partial(A \setminus B)$.
Therefore:
$$ \dim (B \setminus I) \leqslant \dim \partial(A \setminus B) < \dim(A \setminus B) \leq \dim(A) = \dim(B) .$$
So $I$ is almost all of $B$.
\end{proof}

\begin{Cor}\label{corollary:int2}
Let $B_1, \ldots, B_m$ be definable subsets of $A$ which cover $A$.
Then almost every element of $A$ is contained in the relative interior of $B_i$ in $A$ for some $i \leq m$.
\end{Cor}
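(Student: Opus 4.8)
The plan is to derive this as a short consequence of Corollary~\ref{corollary:int1}, which is the single-set version of the statement, together with the fact that $\acl$-dimension of a finite union is the maximum of the dimensions of the pieces. Write $d = \dim(A)$ and, for each $i \le m$, let $I_i \subseteq B_i$ denote the relative interior of $B_i$ in $A$. The goal is to show that $\dim\!\big(A \setminus \bigcup_{i \le m} I_i\big) < d$, which is exactly the assertion that almost every point of $A$ lies in the relative interior of some $B_i$.

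First I would separate off the low-dimensional pieces: put $Z = \bigcup \{ B_i : \dim(B_i) < d \}$, so that $\dim(Z) < d$ since dimension of a finite union is the maximum of the dimensions. For each index $i$ with $\dim(B_i) = d = \dim(A)$, Corollary~\ref{corollary:int1} applies directly and yields $\dim(B_i \setminus I_i) < d$. Now I would run the obvious covering argument: if $p \in A$ lies in no $I_i$, then since the $B_i$ cover $A$ there is some $j \le m$ with $p \in B_j$; if $\dim(B_j) < d$ then $p \in Z$, and otherwise $p \in B_j \setminus I_j$. Hence $A \setminus \bigcup_{i \le m} I_i$ is contained in $Z \cup \bigcup_{i : \dim(B_i) = d}(B_i \setminus I_i)$, which is a finite union of sets of dimension $< d$ and therefore has dimension $< d$.

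I do not anticipate any real obstacle here: the substantive content is entirely in Corollary~\ref{corollary:int1} (hence, ultimately, in Proposition~\ref{frontier}), and what remains is elementary bookkeeping with finite unions. The one point that needs a moment's care is remembering to handle the $B_i$ of dimension strictly less than $d$, since a priori a point of $A$ could be covered only by such sets; collecting them into the exceptional set $Z$ disposes of this cleanly.
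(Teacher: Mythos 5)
Your proof is correct and follows essentially the same route as the paper: split off the $B_i$ of dimension $< \dim(A)$, apply Corollary~\ref{corollary:int1} to each full-dimensional $B_i$, and conclude by the finite-union bookkeeping for dimension. No gaps.
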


\begin{proof}
After permuting the $B_i$ if necessary we may suppose that $n < m$ is such that $\dim(B_i) < \dim(A)$ when $i < n $ and $\dim(B_i) = \dim(A)$ when $i \geq n$.
Then $B_n \cup \ldots \cup B_m$ is almost all of $A$.
Let $I_i$ be the relative interior of $B_i$ in $A$ for each $i \leqslant m$.
By Corollary~\ref{corollary:int1} $I_i$ is almost all of $B_i$ for every $i \geq n$.
It follows that $I_n \cup \ldots \cup I_m$ is almost all of $A$.
\end{proof}
We are mainly interested in the following proposition in the case when $M$ admits a definable group operation which is compatible with the definable uniform structure.
Then $M^k$ is also a group and is hence topologically homogeneous.
In this case we view the following proposition as stating that almost every point in $A$ is ``topologically non-singular''.

\begin{Prop}\label{prp:manifold}
Let $\dim(A) = d$.
Almost every $p \in A$ has a neighborhood $V \subseteq A$ for which there is a coordinate projection $\pi: M^k \rightarrow M^d$ such that $\pi(V)$ is open and the restriction of $\pi$ to $V$ is a homeomorphism onto its image.
\end{Prop}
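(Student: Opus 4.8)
The plan is to combine the weak cell decomposition of Proposition~\ref{prp:weakcell} with the relative–interior statement of Corollary~\ref{corollary:int2} to reduce to a single piece, and then analyze that piece with Lemma~\ref{lem:split}. First I would write $A$ as a finite union of $C$-definable sets $A_1,\dots,A_n$, each — up to a coordinate permutation — the graph of a $C$-definable continuous $m_i$-correspondence $f_i\colon U_i\tto M^{k-d_i}$ with $U_i\subseteq M^{d_i}$ a nonempty open set. Since the projection $\gr(f_i)\to U_i$ is finite-to-one and onto and $U_i$ has dimension $d_i$, subadditivity of dimension gives $\dim A_i=d_i$; as $A_i\subseteq A$ this forces $d_i\le d$, and the union of those $A_i$ with $d_i<d$ has dimension $<d$, so it can be discarded. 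By Corollary~\ref{corollary:int2} the set of $p\in A$ not lying in the relative interior in $A$ of some $A_i$ also has dimension $<d$. Hence almost every $p\in A$ lies in the relative interior in $A$ of some $A_i$ with $d_i=d$, and it suffices to produce the desired neighborhood for such $p$.

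Fix such a $p$ and such an $i$, and pick an open $O\subseteq M^k$ with $p\in O$ and $O\cap A\subseteq A_i$. After the relevant coordinate permutation (a homeomorphism of $M^k$ carrying coordinate projections to coordinate projections, which I would suppress from the notation) we may assume $A_i=\gr(f)$ for a continuous $m$-correspondence $f\colon U\tto M^{k-d}$ with $U\subseteq M^d$ open, and let $\pi\colon M^k\to M^d$ be the projection onto the first $d$ coordinates, so $\pi(A_i)=U$. Applying Lemma~\ref{lem:split} at the point $q=\pi(p)$ gives an open neighborhood $W\subseteq U$ of $q$ and continuous definable functions $g_1,\dots,g_m\colon W\to M^{k-d}$ with pairwise disjoint graphs such that $\gr(f|_W)=\gr(g_1)\cup\dots\cup\gr(g_m)$. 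Choosing the index $j$ with $p\in\gr(g_j)$, I claim $V:=\gr(g_j)\cap O$ is the required neighborhood: the map $x\mapsto(x,g_j(x))$ is a continuous inverse to $\pi$ on $\gr(g_j)$, so $\pi$ restricts to a homeomorphism $\gr(g_j)\to W$, and $V$ is an open subset of $\gr(g_j)$, so $\pi|_V$ is a homeomorphism onto the open subset $\pi(V)$ of $M^d$.

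The one point needing care — and the only real obstacle, though it is bookkeeping rather than a new idea — is checking that $V$ is genuinely a neighborhood of $p$ in $A$, i.e. that $V$ is open in $A$. For this I would note that $\gr(f|_W)=A_i\cap(W\times M^{k-d})$ is open in $A_i$; that by Lemma~\ref{lem:locclos} both $\gr(f|_W)$ and each $\gr(g_{j'})$ are closed in $W\times M^{k-d}$, so with the disjointness from Lemma~\ref{lem:split} the set $\gr(g_j)=\gr(f|_W)\setminus\bigcup_{j'\ne j}\gr(g_{j'})$ is open in $\gr(f|_W)$, hence open in $A_i$; and that $O\cap A\subseteq A_i$ forces $A\cap O=A_i\cap O$, whence $V=\gr(g_j)\cap O$ is open in $A\cap O$ and therefore open in $A$. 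One should also keep track of the suppressed coordinate permutation and of definability of the exceptional set (immediate, since every set occurring in the construction is definable). This argument also subsumes the degenerate cases $d=0$ (where $A$ is finite and each singleton works) and $d=k$ (where the relevant $A_i$ is open and $\pi$ is the identity).
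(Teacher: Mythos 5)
Your proof is correct and follows essentially the same route as the paper: decompose $A$ via Proposition~\ref{prp:weakcell}, use Corollary~\ref{corollary:int2} to reduce to a point in the relative interior of a full-dimensional graph piece, and conclude from the local homeomorphism property of graphs of continuous correspondences. The only difference is that you rederive that last step directly from Lemma~\ref{lem:split} and Lemma~\ref{lem:locclos}, whereas the paper cites Lemma~\ref{lem:localhomeo} (whose proof it leaves to the reader), so your write-up in effect supplies those omitted details.
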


\begin{proof}
Let $A_1, \ldots, A_m \subseteq A$ be definable sets which cover $A$ such that each $A_i$ is, up to permutation of coordinates, the graph of a definable continuous $m_0$-correspondence $f: U_i \tto M^{k-d}$, where $U_i\subseteq M^{d}$ is a definable open set and $0 \leq d \leq k$.
We suppose that $n \leq m$ is such that $\dim(A_i) < \dim(A)$ when $i < n$ and $\dim(A_i) = \dim(A)$ when $i \geq n$.
For each $n \leq i \leq m$ we let $I_i$ be the relative interior of $A_i$ in $A$.
As $A_n \cup \ldots \cup A_m$ is almost all of $A$ Corollary~\ref{corollary:int2} shows that almost every element of $A$ is an element of some $I_i$.
It suffices to fix $i \geq n$ and show that the proposition holds for some $p \in I_i$.
By Lemma~\ref{lem:localhomeo} there is an open $U \subseteq M^k$ and a coordinate projection $\pi: M^k \to M^d$ such that $p \in U$, $\pi(U \cap A_i)$ is open, and the restriction of $\pi$ to $U \cap A_i$ is a homeomorphism onto its image.
After replacing $U$ with a smaller open set if necessary we may assume that $U \cap A \subseteq A_i$.
We let $V = U \cap A$.
Then $\pi(V)$ is open and $\pi|_{V}$ is a homeomorphism onto its image.
\end{proof}

\begin{Prop}\label{prp:finitefibers}
Let $f: M^k \rightarrow M^l$ be a definable function such that $|f^{-1}(p)| < \infty$ for all $p \in M^l$.
Almost every $p \in M^k$ has a neighborhood $V$ such that the restriction of $f$ to $V$ is injective.
\end{Prop}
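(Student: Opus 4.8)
The plan is to study the definable set
$R:=\{(p,q)\in M^{k}\times M^{k} : p\neq q,\ f(p)=f(q)\}$,
the off-diagonal part of the kernel of $f$. If $R=\emptyset$ then $f$ is globally injective and there is nothing to prove, so assume $R\neq\emptyset$. For each $p\in M^{k}$ the fibre $R_{p}=f^{-1}(f(p))\setminus\{p\}$ is finite, so writing $\pi_{1}$ for the projection onto the first $k$ coordinates, subadditivity of dimension gives $\dim R\leq\dim\pi_{1}(R)+0\leq k$.

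First I would apply Proposition~\ref{frontier} to $R$, obtaining $\dim\partial(R)<\dim R\leq k$. Write $\Delta=\{(p,p):p\in M^{k}\}\subseteq M^{k}\times M^{k}$ for the diagonal. Since $R$ is disjoint from $\Delta$, every point of $\cl(R)\cap\Delta$ lies in $\cl(R)\setminus R=\partial(R)$; hence $\cl(R)\cap\Delta\subseteq\partial(R)$, and therefore $\dim(\cl(R)\cap\Delta)<k$. (As elsewhere in the paper I use that the closure of a definable set is definable in this setting.)

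Next I would identify the bad set
$B:=\{p\in M^{k} : f|_{W}\text{ is not injective for every open }W\ni p\}$,
which is exactly the set of points of $M^{k}$ admitting no neighbourhood on which $f$ is injective; it suffices to show $\dim B<k$. The key observation is that $B=\{p\in M^{k}:(p,p)\in\cl(R)\}$. Indeed, the boxes $W\times W$, with $W$ an open neighbourhood of $p$, are cofinal among the neighbourhoods of $(p,p)$ in $M^{k}\times M^{k}$, and $(W\times W)\cap R\neq\emptyset$ says precisely that there are distinct $q_{1},q_{2}\in W$ with $f(q_{1})=f(q_{2})$, that is, that $f|_{W}$ is not injective; so $(p,p)\in\cl(R)$ if and only if $f|_{W}$ fails to be injective for every open $W\ni p$. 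Via the definable bijection $p\mapsto(p,p)$ this identifies $B$ with $\cl(R)\cap\Delta$, so $\dim B=\dim(\cl(R)\cap\Delta)<k$. Since $M^{k}\setminus B$ is definable and is exactly the set of $p$ having a neighbourhood on which $f$ is injective, this gives the conclusion.

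I do not expect a real obstacle here. The only points needing care are the elementary bookkeeping that product boxes around a diagonal point are cofinal among its neighbourhoods, and the recognition that the content of the statement is the frontier inequality (Proposition~\ref{frontier}) applied not to $A$ but to the auxiliary set $R$: the frontier bound is exactly what forces the diagonal slice $\cl(R)\cap\Delta$ — equivalently, the locus of non-local-injectivity of $f$ — to have dimension strictly below $k$.
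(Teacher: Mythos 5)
Your proposal is correct and follows essentially the same route as the paper: both introduce the off-diagonal set $\{(p,q): p\neq q,\ f(p)=f(q)\}$, bound its dimension by $k$ via finiteness of fibers and subadditivity, apply Proposition~\ref{frontier} to conclude $\dim(\cl(R)\cap\Delta)<k$, and identify the locus of non-local-injectivity with the diagonal slice of the closure. The only cosmetic difference is that you verify both directions of that identification, whereas the paper only uses the direction it needs.
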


\begin{proof}
Let $\Delta$ be the diagonal $\{ (x,x) : x \in M^k\}$ in $M^k \times M^k$.
Let $D \subseteq M^k \times M^k$ be the set of $(x,y) \in M^k \times M^k$ such that $x \neq y$ and $f(x) = f(y)$.
For each $x \in M^k$ there are at most finitely many $y \in M^k$ such that $(x,y) \in D$.
Thus $\dim(D) \leqslant k$ and so $\dim \partial(D) < k$.
As $\Delta$ and $D$ are disjoint this implies that $\dim( \cl(D) \cap \Delta) < k$.
Let $B$ be the set of $p \in M^k$ such that $(p,p) \notin \cl(D)$.
Then $B$ is almost all of $M^k$.
Fix $p \in B$.
There is an open neighborhood $V$ of $p$ such that $[V \times V] \cap D  =\emptyset$.
If $x,y \in V$ and $x \neq y$ then as $(x,y) \notin D$ we have $f(x) \neq f(y)$.
Thus $f$ is injective on $V$.
\end{proof}

\section{One-variable Functions}
In this final section we prove two results about one-variable functions.
\begin{Prop}\label{prp:1varhomeo}
Let $f: M \rightarrow M$ be a definable function.
All but finitely many $p \in M$ have an open neighborhood $V$ on which one of the following holds:
\begin{enumerate}
\item the restriction of $f$ to $V$ is constant;
\item $f(V)$ is open and the restriction of $f$ to $V$ is a homeomorphism onto its image.
\end{enumerate}
\end{Prop}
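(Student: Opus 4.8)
The plan is to partition a cofinite open part of $M$ into a region where $f$ is locally constant and a region where $f$ is locally injective, to use a frontier argument to show these two regions leave out only finitely many points, and then to upgrade ``locally injective'' to alternative (2) by applying generic continuity to the local inverse of $f$. First I would replace $M$ by the set $V_0$ of points at which $f$ is continuous: by Proposition~\ref{proposition:continuity} together with (\textbf{Inf}) the set $M\setminus V_0$ is finite, so it suffices to treat all but finitely many $p\in V_0$. Let $P_c\subseteq V_0$ be the set of $p$ having an open neighborhood on which $f$ is constant, and $P_i\subseteq V_0$ the set of $p$ having an open neighborhood on which $f$ is injective; both are definable and open. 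Every point of $P_c$ satisfies alternative (1), so the two remaining tasks are: (a) $V_0\setminus(P_c\cup P_i)$ is finite; and (b) all but finitely many points of $P_i$ satisfy alternative (2).

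For (a), if $V_0\setminus(P_c\cup P_i)$ were infinite it would by (\textbf{Inf}) contain a nonempty open $W$, necessarily disjoint from $P_c$ and from $P_i$. Consider the definable set
$$D=\{(x,y)\in W\times W: x\neq y,\ f(x)=f(y)\}.$$
Since no point of $W$ lies in $P_i$, for every $p\in W$ and every entourage $O$ the basic neighborhood $O[p]\times O[p]$ of $(p,p)$ meets $D$; hence the diagonal $\Delta_W=\{(p,p):p\in W\}$ lies in $\cl(D)$, and since $\Delta_W\cap D=\emptyset$ we get $\Delta_W\subseteq\partial(D)$. As $\Delta_W$ projects onto $W$ we have $\dim\Delta_W\geq 1$, so $\dim\partial(D)\geq 1$, and Proposition~\ref{frontier} forces $\dim D=2$. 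By Lemma~\ref{lem:dprank} there are nonempty definable open $U_1,U_2\subseteq M$ with $U_1\times U_2\subseteq D$; then $U_1,U_2\subseteq W$, and fixing $y_0\in U_2$ gives $f\equiv f(y_0)$ on the nonempty open set $U_1\subseteq W$, so every point of $U_1$ lies in $P_c$ --- contradicting $W\cap P_c=\emptyset$.

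For (b), it suffices to show that the definable set of $p\in P_i$ satisfying alternative (2) is dense in $P_i$, since its complement in $P_i$ is then a definable subset of $M$ with empty interior and hence finite by (\textbf{Inf}). So fix a nonempty open $V''\subseteq P_i$, pick $p\in V''$, and choose a nonempty open $V$ with $p\in V\subseteq V''$ on which $f$ is injective (and continuous, since $V\subseteq V_0$). The image $f(V)$ is infinite, so by (\textbf{Inf}) it contains a nonempty open set $O$, and $f$ maps the open set $\{x\in V:f(x)\in O\}$ bijectively onto $O$; let $h\colon O\to V$ be the inverse. By Proposition~\ref{proposition:continuity}, $h$ is continuous on a dense open $O'\subseteq O$. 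Then $h(O')=\{x\in V:f(x)\in O'\}$ is open by continuity of $f$, and $f$ restricted to this set and $h|_{O'}$ are mutually inverse continuous bijections, so $f$ restricts to a homeomorphism of the nonempty open set $h(O')\subseteq V''$ onto the open set $O'$; since a homeomorphism onto an open set carries open subsets to open sets, every point of $h(O')$ satisfies alternative (2). Hence the alternative-(2) points are dense in $P_i$, completing (b). Combining (a), (b), and the finiteness of $M\setminus V_0$ exhibits the required finite exceptional set.

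I expect the substantive step to be (b): the device of shrinking the codomain to an open $O\subseteq f(V)$ so that the local inverse $h$ becomes a definable function on an open set, to which Proposition~\ref{proposition:continuity} applies, is exactly what converts local injectivity into local openness. Step (a) is then routine, the only idea being that simultaneous failure of local constancy and local injectivity pushes the entire diagonal of $W$ into the frontier of $D$, which is forbidden by Proposition~\ref{frontier}.
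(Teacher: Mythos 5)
Your proof is correct, and its second half is essentially the paper's own argument: on the locus of local injectivity you shrink the image to a definable open set, apply Proposition~\ref{proposition:continuity} to the definable local inverse, and pull back a dense open set of continuity points to obtain a homeomorphism onto an open image. Where you genuinely differ is in how the constant/injective dichotomy is established. The paper argues density directly: on a given open $U$ it splits according to whether $f|_U$ has an infinite fiber (which by (\textbf{Inf}) yields an open set of constancy) or finite fibers (in which case Proposition~\ref{prp:finitefibers} gives local injectivity almost everywhere). You instead form the definable open loci $P_c$ and $P_i$ and apply Proposition~\ref{frontier} directly to the collision set $D$ over an open $W$ missing both: failure of local injectivity at every point of $W$ pushes the diagonal into $\partial(D)$, forcing $\dim D=2$ by the frontier inequality, and the resulting box $U_1\times U_2\subseteq D$ produces constancy on $U_1$, a contradiction. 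This merges the paper's two cases into a single frontier argument (in effect an inlined variant of the proof of Proposition~\ref{prp:finitefibers}, used positively to produce constancy rather than assuming finite fibers), at the mild price of having to note that $P_c$, $P_i$ and the set of points satisfying alternative (2) are definable so that (\textbf{Inf}) applies, and that your $V$ in step (b) should be taken to be (the interior of) a ball so that $f(V)$ is definable; these are routine via the definable family $\mathcal B$, and the same implicit definability underlies the paper's reduction to density. Both routes use exactly the same ingredients ((\textbf{Inf}), generic continuity, the frontier inequality), so neither is more general; yours yields an explicit definable partition of the cofinite good set, while the paper's is slightly shorter.
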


\begin{proof}
It is enough to show that the set of $p$ satisfying either (1) or (2) above is dense.
Fix a definable open $U \subseteq M$.
We show that $U$ contains a point at which either (1) or (2) holds.
We first suppose that the restriction of $f$ to $U$ does not have finite fibers.
Then there is a $p \in U$ for which there are infinitely many $q \in U$ satisfying $f(q) = f(p)$.
This implies that there is a definable open $V \subseteq U$ such that $f(q) = f(p)$ for all $q \in V$.
Then $(1)$ holds at any point in $V$.
We now suppose that $f|_U$ has finite fibers.
After applying Proposition~\ref{proposition:continuity} and replacing $U$ with a smaller definable open set if necessary we suppose that $f$ is continuous on $U$.
After applying Proposition~\ref{prp:finitefibers} and replacing $U$ with a smaller definable open set if necessary we assume that $f|_U$ is injective.
Then $f(U)$ is infinite and thus contains a definable open set $W$.
By Proposition~\ref{proposition:continuity} there is a definable open $W' \subseteq W$ such that $(f|_U)^{-1}$ is continuous on $W'$.
Then $(f|_U)^{-1}(W')$ is infinite and thus contains a definable open set $V \subseteq U$.
The restriction of $f$ to this $V$ is a homeomorphism onto its image.
\end{proof}
\noindent Finally, we characterize when algebraic closure on $M$ admits exchange.
\begin{Prop}
Exactly one of the following holds:
\begin{enumerate}
\item there is a non-empty definable open $U \subseteq M$ and a locally constant correspondence $U \tto M$ with infinite image.
\item $\acl$ satisfies exchange.
\end{enumerate}
\end{Prop}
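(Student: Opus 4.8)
The plan is to prove that $(1)$ and $(2)$ are negations of one another, which immediately gives that exactly one holds. The implication $(1)\Rightarrow\lnot(2)$ is easy: if $g\colon U\tto M$ is a locally constant correspondence with infinite image, $U$ a non-empty definable open set, and $B$ a small set over which $g,U$ are defined, then since $\operatorname{im}(g)$ is an infinite $B$-definable subset of $M$ we may pick $a\in\operatorname{im}(g)$ with $a\notin\acl(B)$. Local constancy of $g$ makes $U_a:=\{y\in U:a\in g(y)\}$ open; it is non-empty, hence infinite, so we may pick $b\in U_a$ with $b\notin\acl(Ba)$. Since $a\in g(b)$ this gives $a\in\acl(Bb)\setminus\acl(B)$ while $b\notin\acl(Ba)$, so $\acl$ fails exchange.

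For the converse, suppose $\acl$ fails exchange: there are $a,b\in M$ and a small set $B$ with $a\in\acl(Bb)$, $a\notin\acl(B)$, $b\notin\acl(Ba)$ (note $b\notin\acl(B)$, else $\acl(Bb)=\acl(B)\not\ni a$). Unwinding $a\in\acl(Bb)$ and eliminating $\exists^\infty$ (Lemma~\ref{lemma:eliminateinfty}) gives a $B$-definable correspondence $f\colon Y\tto M$, $Y\subseteq M$, with $b\in Y$ and $a\in f(b)$; since $Y\setminus\interior{Y}$ is finite (Lemma~\ref{lem:dprank}) it misses $b$, so replacing $Y$ by $\interior{Y}$ we may take $Y$ open. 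Let $W_\infty$ be the $B$-definable set of $x\in M$ with $\{y\in Y:x\in f(y)\}$ infinite. Since $\{y\in Y:a\in f(y)\}$ is $Ba$-definable and has $b$ in its interior (as $b\notin\acl(Ba)$), it is infinite, so $a\in W_\infty$; and $a\notin\acl(B)$ then forces $a\in W_0:=\interior{W_\infty}$, a non-empty open $B$-definable, hence infinite, subset of $M$. By (\textbf{Inf}), $\interior{\{y\in Y:x\in f(y)\}}$ is non-empty for every $x\in W_0$.

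The heart of the argument is to keep only the ``locally persistent'' values of $f$: set $\Lambda:=\{(y,x)\in Y\times W_0:y\in\interior{\{y'\in Y:x\in f(y')\}}\}$, a $B$-definable set. If $(y,x)\in\Lambda$ then $x\in f(y)$, so each fibre $\Lambda_y$ is finite, and $\Lambda_y\neq\emptyset$ exactly when $y$ lies in the open set $U_1:=\pi_1(\Lambda)=\bigcup_{x\in W_0}\interior{\{y'\in Y:x\in f(y')\}}$, which contains $b$ since $(b,a)\in\Lambda$. Let $h\colon U_1\tto M$ be the correspondence with $\gr(h)=\Lambda$. Then $h$ is lower semicontinuous: if $x\in h(y)$ then $x\in f(y')$ on an open set $N_x\ni y$, whence $N_x\subseteq\interior{\{y'\in Y:x\in f(y')\}}\subseteq U_1$ and so $x\in h(y')$ for all $y'\in N_x$; intersecting the finitely many $N_x$ ($x\in h(y)$) yields $h(y)\subseteq h(y')$ for $y'$ in a neighbourhood of $y$. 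Fix $n$ with $|h(y)|\le n$ for all $y$ (saturation) and put $V_i:=\interior{\{y\in U_1:|h(y)|=i\}}$ for $1\le i\le n$. By Corollary~\ref{corollary:partition} the union $\bigcup_i V_i$ contains almost all of $U_1$, hence is dense in $U_1$. On each $V_i$, lower semicontinuity together with $|h|\equiv i$ forces $h(y')=h(y)$ near $y$, so $h|_{V_i}$ is locally constant. Finally, given $x\in W_0$, the non-empty open set $\interior{\{y'\in Y:x\in f(y')\}}\subseteq U_1$ meets the dense set $\bigcup_i V_i$, so $x\in\operatorname{im}(h|_{V_i})$ for some $i$; thus $W_0\subseteq\bigcup_i\operatorname{im}(h|_{V_i})$, and as $W_0$ is infinite some $\operatorname{im}(h|_{V_i})$ is infinite. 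That $V_i$ — non-empty, since its $h$-image is — together with $h|_{V_i}$ witnesses alternative $(1)$.

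The step I expect to be the main obstacle is exactly the one this construction addresses. Failure of exchange naturally yields, near a point, a single \emph{branch} of $f$ that is constant, but a correspondence that merely contains a constant branch is neither locally constant nor does it necessarily have infinite image. Keeping in $\Lambda$ only the values of $f$ surviving on a neighbourhood, and then passing to the locus where $|\Lambda_y|$ is locally constant, repairs local constancy, while the density of $\bigcup_i V_i$ forces the global image to stay infinite. (The argument uses only (\textbf{Inf}), elimination of $\exists^\infty$, the dimension theory developed above, and Corollary~\ref{corollary:partition}; generic continuity — Proposition~\ref{proposition:continuity} and its corollaries — is not needed.)
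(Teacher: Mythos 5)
Your proof is correct. For the hard direction (failure of exchange produces the correspondence) your construction is at heart the paper's: the paper also passes from the witnessing finite-valued relation $D$ to the subrelation of pairs $(p,q)$ with $p$ interior to the fibre $\{x:(x,q)\in D\}$ --- exactly your $\Lambda$, minus the restriction of the second coordinate to $W_0$ --- and then restricts to the interior of the locus where the number of values is constant, getting local constancy by the same ``lower semicontinuity plus constant cardinality'' argument. Where you genuinely diverge is in how the image is kept infinite: the paper carries the original pair $(a,b)$ through the shrinking, using that each discarded set is finite and definable over the base (so misses the non-algebraic points), and then concludes that the image, being definable over the base and containing the non-algebraic point, is infinite; you instead drop the pair after the setup, restrict the values to $W_0=\interior{W_\infty}$, and show by a density and covering argument that the infinite set $W_0$ is contained in the union of the finitely many images of $h|_{V_i}$, so one of these images is infinite. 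Both work; the paper's bookkeeping is shorter, yours is more uniform and does not need to track specific realizations. For the other direction you are genuinely more elementary: instead of assuming exchange and deriving a dimension contradiction via the inequality $\dim\gr(f)\geq 1+\dim\pi_2[\gr(f)]$ (the paper's route, which leans on additivity of $\acl$-dimension under exchange), you directly manufacture a failure of exchange by choosing $a$ in the image outside $\acl(B)$ and then $b\notin\acl(Ba)$ inside the open (hence infinite) fibre $U_a$; this uses only saturation, (\textbf{Inf}) and finiteness of $g(b)$, and sidesteps the dimension theory entirely. The only caveat, which is cosmetic, is that in both of your saturation steps the parameter set should be taken finite (as it may be, since the relevant objects are definable over finitely many parameters) so that the $|L|^+$-saturation of $\cM$ indeed supplies points avoiding $\acl$ of the base.
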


\begin{proof}
Let $\pi_1,\pi_2 : M^2 \rightarrow M$ be the projections onto the first and second coordinates, respectively.
We first suppose that $\acl$ satisfies exchange and show that (1) does not hold.
Suppose towards a contradiction that $U \subseteq M$ is definable and open and that $f: U \tto M$ is a locally constant correspondence with infinite image.
The restriction of $\pi_1$ to $\gr(f)$ has finite fibers hence:
$$\dim \gr(f) = \dim(U) = 1.$$
If $(a,b) \in \gr(f)$ then there is an open neighbourhood $V \subseteq U$ of $a$ such that $(a',b) \in \gr(f)$ for all $a' \in V$.
Therefore the restriction of $\pi_2$ to $\gr(f)$ has infinite fibers, so as $\acl$ admits exchange:
$$\dim \gr(f) \geq 1 + \dim \pi_2[\gr(f)].$$
As $f$ has infinite image $\dim \pi_2[\gr(f)] = 1$ so $\dim \gr(f) = 2$, contradiction.
We now suppose that $\acl$ does not satisfy exchange.
Then there is a set of parameters $K \subseteq M$ and $a,b \in M$ such that:
$$b \in \acl(K \cup \{a\}) \setminus \acl(K) \quad \text{and} \quad a \notin \acl(K \cup \{b\}).$$
This implies that there is a $K$-definable $D \subseteq M \times M$ such that $(a,b) \in D$ and for every $a' \in M$ and there are only finitely many $b' \in M$ such that $(a',b') \in D$.
As $a \notin \acl(K \cup \{b\} )$, $a$ is an interior point of $\{ x \in M : (x,b) \in D\}$.
Let $D'$ be the $K$-definable set of $(p,q) \in D$ such that $p$ is an interior point of $\{ x \in M : (x,q) \in D \}$.
If $(p,q) \in D'$ then $p$ is an interior point of $\{ x \in M : (x,q) \in D' \}$.
After replacing $D$ with $D'$ if necessary we suppose that $p$ is an interior point of $\{ x \in M : (x,q) \in D\}$ for all $(p,q) \in D$.
This implies that $\pi_1(D)$ is open.
We declare $V = \pi_1(D)$ and let $g: V \tto M$ be the $K$-definable correspondence such that $\gr(f) = D$.
If $q \in g(p)$ for some $p \in V$ then $p$ is in the interior of $\{ x \in M : q \in f(x) \}$.
Let $N$ be such that $|g(p)| \leqslant N$ for all $p \in V$.
For $1 \leqslant i \leqslant N$ let $E_i$ be the set of $p \in V$ such that $|g(p)| = i$.
As
$$ |V \setminus [\interior{E_1} \cup \ldots \cup \interior{E_N}]| < \infty $$
we have $a \in \interior{E_n}$ for some $n$.
We let $U = \interior{E_n}$ and $f$ be the restriction of $g$ to $U$.
As $b$ is in the image of $f$ and $b \notin \acl(K)$, $f$ must have infinite image.
We show that $f$ is locally constant.
Let $p \in U$ and $f(p) = \{q_1,\ldots,q_n\}$.
It follows by definition of $V$ that for every $1 \leqslant i \leqslant n$ we can choose a neighborhood $W_i \subseteq U$ of $p$ such that $q_i \in f(p')$ for any $p' \in W_i$.
Let $W$ be the intersection of the $W_i$.
If $p' \in W$ then $\{q_1,\ldots, q_n\} \subseteq f(p)$.
As $p' \in E_n$ we have $|f(p)| = n$ so $\{q_1,\ldots,q_n\} = f(p)$.
Thus $f(p)$ is constant on $W$. 
\end{proof}

\bibliographystyle{alpha}
\bibliography{franzi}
\end{document}